\newtheorem{theorem}{Theorem}[section]
\newtheorem{corollary}[theorem]{Corollary}
\newtheorem{proposition}[theorem]{Proposition}
\newtheorem{conjecture}[theorem]{Conjecture}
\numberwithin{equation}{section}
\numberwithin{figure}{section}
\newcommand{\T}{\mathbb T}
\newcommand{\R}{\mathbb R}
\newcommand{\Pa}{\mathcal P}
\newcommand{\bna}{\begin{eqnarray}}
\newcommand{\ena}{\end{eqnarray}}
\newcommand{\ba}{\begin{eqnarray*}}
\newcommand{\ea}{\end{eqnarray*}}
\newcommand{\bs}[1]{}
\def\p{{\bf p}}
\def\pn{{\bf p =(p_1, \dots, p_n) }}
\def\q{{\bf q}}
\def\r{{\bf r}}
\def\s{{\bf s}}
\begin{document}
\title{The Isostatic Conjecture \thanks{This work was partially supported by the National Science Foundation Grant DMS-1564493 for Connelly, Solomonides and Yampolskaya, and National Science Foundation Grant DMS-1564473 for Gortler.}\\
}

\author{Robert Connelly,  Steven J. Gortler, 
Evan Solomonides, and Maria Yampolskaya}
\maketitle 
{\footnotesize \noindent Department of Mathematics (for Connelly, Solomonides, and Yampolskaya)\\
 \footnotesize Cornell University \\
 \footnotesize Ithaca, NY 14853\\
 \footnotesize mailto:  connelly@math.cornell.edu}\\
 
 {\footnotesize \noindent School of Engineering and Applied Sciences (for Gortler)\\
 \footnotesize Harvard University \\
 \footnotesize Cambridge, MA 02138}

\begin{abstract}  We show that a jammed packing of disks with generic radii, in a generic container, is such that the minimal number of contacts occurs and there is only one dimension of equilibrium stresses, which have been observed with numerical Monte Carlo simulations.  We also point out some connections to packings with different radii and results in the theory of circle packings whose graph forms a triangulation of a given topological surface. 

{\bf Keywords: } packings, square torus, density, granular materials, stress distribution, Koebe, Andreev, Thurston.

\end{abstract}
\section{Introduction} \label{section:introduction}

Granular material, made of small rocks or grains of sand, is often modeled as a packing of circular disks in the plane or round spheres in space. In order to analyze the internal stresses that resolve external loads, there is a lot of interest in the distribution of the stresses in that material.  See, for example, \cite{stress-distribution-I, stress-distribution-II, stress-distribution-III, missing-stress-equation}.  A \emph{self stress} is an assignment of scalars to the edges of the graph of contacts such that at each vertex (the disk centers) there is a vector equilibrium maintained.  One property that has come up in this context is that, when a packing is jammed in some sort of container, there is necessarily an internal self-stress that appears.  It seems to be taken as a matter of (empirical) fact that when the radii of the circles (or spheres) are chosen generically, there is only one such self stress, up to scaling.  In that case, one says that the structure is \emph{isostatic}.  This statement seems to be borne out in many computer simulations, since it is essentially a geometric property of the jammed configuration of circular disks.  See, for example, the work of J-N Roux \cite{Roux-isostatic}, Atkinson et. al.  \cite{Torquato-isostatic, Torquato-mono}, and \cite{basics}.  When the disks all have the same radius, and thus are non-generic, for example, it quite often turns out that the packing is not isostatic.  Here, we refer to the (mathematical) statement that when the radii and lattice are generic, the packing has a single stress up to scaling, as the \emph{isostatic conjecture}.  Note that when the radii of the packing disks are chosen generically,  this does not imply that the coordinates of the configuration of the centers of the disks are generic.  There is a wide literature on the rigidity of frameworks, when the configuration is generic, for example from the basic results in the plane starting with Laman \cite{Laman}, and more generally as described in Asimow and Roth \cite{Asimow-Roth-I, Asimow-Roth-II}.  But if the graph of the packing has any cycle of even length, the corresponding edge lengths between the vertex centers will not be generic since the sum of the lengths of half of the edges will be the same as the sum of the lengths of the other half of the edges.   The configuration of the centers will not be generic either, since if they were, the edge lengths of a cycle would also be generic. 

There are many different instances when the isostatic conjecture could be posed. For example, one could enclose a collection of disks with fixed radii inside a polygon and squeeze the shape of the polygon until the packing inside is jammed.  It can happen that there is an occasional disk that is not fixed to the others, which we call a \emph{rattler}, and in that case we ignore it, since it does not contribute to the self stress lying in the packing.  See also \cite{Torquato-isostatic, Torquato-mono} for the effect of rattlers on the density.  Another example, and one that we will investigate in our study, is when a packing is periodic with a given lattice determining its overall symmetry. We then increase the radii uniformly, keeping a fixed ratio between every pair of radii, maintaining the genericity.  In dimension three and higher, we do not provide any method  to prove that jammed packings are isostatic. We rely very heavily on two dimensional techniques. 

Another interesting aspect of the ideas here is that we connect some of the principles of the rigidity theory of jammed packings as in the work of Will Dickinson et al. \cite{Connelly-Vivian}, Oleg Musin and Anton Nikitenko \cite{Musin},  \cite{Connelly-Smith}, and \cite{Connelly-Vivian}, with another theory of analytic circle packings as in the book of Kenneth Stephenson \cite{Stephenson}.  There are essentially two seemingly independent methods of creating a circle packing.  One is created by modeling the disks as having fixed ratios and increasing the packing density until they jam, and the other is based on an idea that goes back to at least Koebe, Andreev and Thurston \cite{Thurston}, where the graph of the packing is a predetermined a triangulation.  Here we use an extension where some of the distances between disks are determined by an inversive distance, defined later, between circles by Ren Guo in \cite{Guo}.


\section{Rigidly jammed circle packings} \label{section:jammed}

We need a container for our packings.  For the sake of simplicity and because of the lack of boundary effects, we will use the $2$-dimensional torus $\T^2=\R^2/\Lambda$ regarded as the Euclidean plane  $\R^2$ modulo the fixed integer lattice $\Lambda$ as the container.  A packing ${{\bf P}}$ in $\T^2$ is a finite union of labeled circular disks with disjoint interiors.  We say that ${{\bf P}}$ is \emph{locally maximally dense} if there is an $\epsilon > 0$ such that for any other  packing ${{\bf Q}}$ with corresponding radii in the same ratio and $|{{\bf Q}}-{{\bf P}}| < \epsilon$, then $\rho({{\bf Q}}) \le \rho({{\bf P}})$. The density of ${{\bf P}}$ is $\rho({{\bf P}}) = \sum_i A(D_i)/A(\T^2)$, where $A()$ is the usual area function.  The distance between packings is regarded as the distance between the vectors of the centers and radii of the packing disks.  In other words, near the packing ${{\bf P}}$, except for translates,  we cannot increase the packing radii uniformly and maintain the packing constraints. 

A first process is that we can ``inflate" the packing disks uniformly until some subset of the packing disks jam and prevent any other expansion.  This process is called a ``Monte Carlo" method in Torquato et. al. \cite{Donev-Torquato-Connelly, Torquato-tricusp}.  Let $\r=(r_1, \dots, r_n)$ be the radii of the corresponding packing disks ${{\bf P}} = (D_1, \dots, D_n)$.  The idea is to continuously increase the radii to for $t> 1$, and at the same time continuously deform the packing to ${{\bf P}}(t)$ so that the radius of $D_i$ is $tr_i$, until no further increase in $t$ is possible.  Then the resulting packing ${{\bf P}}(t_1)$ will be locally maximally dense.  We would like to say that ${{{\bf P}}}(t_1)$ is rigid or jammed.  But there is a problem with rattlers as in Figure \ref{fig:rattler}.   Considering the rigidity of the packing, we just discard the rattlers.
 \begin{figure}[!htb]
    \centering
        \includegraphics[width=0.4\textwidth]{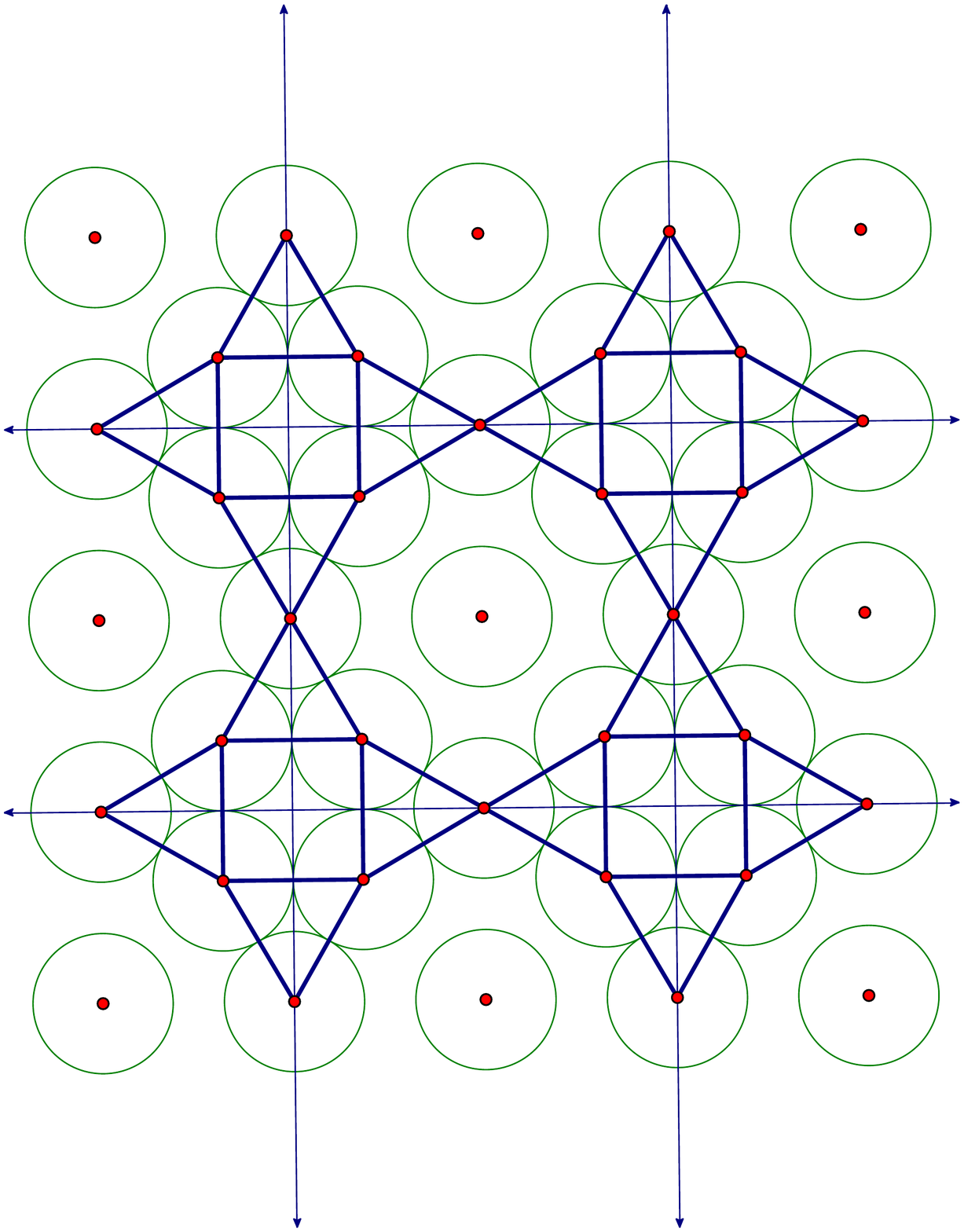}%
        \captionsetup{labelsep=colon,margin=2cm}
         \caption{This is a maximally dense packing of $7$ disks  in a square torus,   with a rattler in the middle, found by Musin and Nikitenko  in \cite{Musin}. The thin horizontal and vertical lines  outline fundamental regions of the torus.}
\label{fig:rattler}
    \end{figure}
     
We need some tools to determine the rigidity of packings.  Given a packing ${{\bf P}}$ in a torus, define a packing graph $G({{\bf P}})$, where vertices are the centers of the packing disks, and edges connect centers whose disks touch, as in Figure \ref{fig:rattler}.  Note that in the torus $G({{\bf P}})$ may have loops and multiple edges, but in all our calculations, we work in the universal cover where $G({{\bf P}})$ has no loops or multiple edges.  We are effectively working with the equivalence classes of lattices $\p_i +\Lambda$ in the plane.  When there is an (oriented) edge joining two vertices of $G({{\bf P}})$ this can always be represented as a well-defined vector $\p_i-\p_j$ in the plane $\R^2$.

Let $\p_i$ be the center of disk $D_i$.  Let $\p'=(\p_1',\dots, \p_n')$ be a corresponding sequence of vectors, where $\p_i' \in \R^2$ is in the tangent space of $\T^2$ at $\p_i$.  We say that $\p'$ is an \emph{infinitesimal flex} of $G({{\bf P}})$, if for each edge $\{i,j\}$ of $G$,
\begin{equation}\label{eqn:inf-flex}
(\p_i-\p_j)\cdot (\p'_i-\p'_j) \ge 0.
\end{equation}
 We say that $\p'$ is a \emph{trivial infinitesimal flex} of $G({{\bf P}})$, if all the  $\p'_i$ are the same vector, for $i=1\dots, n$.  We say that a packing ${{\bf P}}$ is \emph{(locally) rigid or collectively jammed} if the only continuous motion of the packing (preserving the radii) is by translations.  We say that a packing ${{\bf P}}$ is \emph{infinitesimally rigid} if every infinitesimal flex is trivial.
 
\begin{theorem}\label{theorem:Danzer} A packing ${{\bf P}}$ in a torus $\T^2$ is collectively jammed if and only if it is infinitesimally rigid.  Furthermore ${{\bf P}}$ is locally maximally dense if and only if there is a subset of the packing that is collectively jammed.
\end{theorem}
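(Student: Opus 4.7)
The plan is to prove the two assertions in the order stated, using the now-standard dictionary between infinitesimal and continuous flexibility of packings.

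For the equivalence of collective jamming and infinitesimal rigidity, the reverse implication (infinitesimal flex yields continuous flex) is the easier direction. Given a non-trivial infinitesimal flex $\p'$, I would form the linear path $\p(t) = \p + t\p'$. For each contact edge $\{i,j\}$ of $G(\mathbf{P})$,
\[
|\p_i(t)-\p_j(t)|^2 = (r_i+r_j)^2 + 2t(\p_i-\p_j)\cdot(\p'_i-\p'_j) + t^2|\p'_i-\p'_j|^2,
\]
which is at least $(r_i+r_j)^2$ for $t \ge 0$ by \eqref{eqn:inf-flex}, while non-touching pairs remain separated for small $t$ by continuity. Hence $\p(t)$ is a valid packing motion, non-trivial since $\p'$ is.

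The forward implication is the hard one and proceeds by compactness. Assume $\mathbf{P}$ is not collectively jammed, yielding a sequence of packings $\mathbf{P}_k \to \mathbf{P}$ with the same radii, none a translate of $\mathbf{P}$. Write $\mathbf{P}_k = \mathbf{P} + t_k\v_k$ with $t_k \to 0^+$, $\|\v_k\|=1$, and each $\v_k$ normalized to lie in a fixed complement of the translation subspace so that it is genuinely non-trivial. After passing to a convergent subsequence $\v_k \to \v$, I subtract $|\p_i-\p_j|^2 = (r_i+r_j)^2$ from the packing constraint $|\p_i-\p_j+t_k(v_{k,i}-v_{k,j})|^2 \ge (r_i+r_j)^2$, divide by $2t_k$, and let $k \to \infty$; this yields precisely \eqref{eqn:inf-flex} for the non-trivial vector $\v$, contradicting infinitesimal rigidity.

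For the density statement, if some subpacking $\mathbf{P}^* \subseteq \mathbf{P}$ is collectively jammed then by the first part it is infinitesimally rigid, and a variant of the same compactness argument rules out any small uniform inflation of the radii of $\mathbf{P}^*$; this blocks any uniform scaling of the full $\mathbf{P}$, so $\mathbf{P}$ is locally maximally dense. Conversely, given a locally maximally dense $\mathbf{P}$, iteratively delete rattlers to obtain a subpacking $\mathbf{P}^*$. If $\mathbf{P}^*$ were not collectively jammed, a non-trivial flex of $\mathbf{P}^*$ combined with a small uniform radius increase would produce a strictly denser packing (the removed rattlers still fitting into their only slightly perturbed cavities), contradicting local maximality. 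The main technical obstacle is the normalization and compactness step in the forward direction of the first equivalence: one must quotient carefully by translations to ensure the extracted limit vector is genuinely non-trivial, and then verify that the edge inequalities pass to the limit; once this is in hand, the density claim follows by essentially bookkeeping on the rattler set.
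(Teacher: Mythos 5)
Your treatment of the first equivalence is correct and is exactly the argument the paper has in mind (note the paper does not prove Theorem \ref{theorem:Danzer} itself; it cites \cite{basics,Donev-Torquato-Connelly} and only sketches that an infinitesimal flex can be promoted to a motion because the higher-order terms work in one's favor, the other direction being standard): the linear motion $\p + t\p'$ works for struts, and the normalization/compactness argument recovers a non-trivial infinitesimal flex from a sequence of nearby non-congruent packings, modulo the translation quotient you already flag. The ``if'' half of the density statement is also essentially right: a nearby packing with uniformly inflated radii cannot restrict to an exact translate of the jammed subpacking, so the same limiting argument yields a non-trivial infinitesimal flex of that subpacking, a contradiction.

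The genuine gap is in the ``only if'' half of the density statement. The step ``a non-trivial flex of ${\bf P}^*$ combined with a small uniform radius increase would produce a strictly denser packing'' is asserted, not proved, and as a general implication it is false: a continuous flex need not make any contact slack (the constraints are inequalities, and the flex can preserve every tangency throughout), in which case it creates no room for uniform growth. A concrete illustration is the square-lattice packing of equal disks on a square torus: the shear flex keeps every in-row contact tight, so ``flex plus inflate'' gains nothing, and the density increase that does exist there requires a genuinely second-order zigzag construction, not your one-line step. Moreover, your reduction via ``iteratively delete rattlers'' proves (or rather, would need) a statement stronger than the theorem, and that stronger statement is problematic: a disk can be individually trapped by its neighbors (hence not a rattler) while a cluster of such disks admits a collective motion --- this is precisely the known gap between \emph{locally} jammed and \emph{collectively} jammed packings. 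A locally maximally dense packing consisting of a collectively jammed core together with such a collectively movable, individually trapped cluster in a pocket has no rattlers, yet its full disk set is not collectively jammed; your argument would then ``produce'' a denser nearby packing, which cannot exist. The theorem only claims that \emph{some} subset is collectively jammed, and identifying that spine is the real content of this direction: the proofs in \cite{basics,Donev-Torquato-Connelly} work with infinitesimal flexes augmented by a radius-growth variable, use the second-order argument to convert a first-order flex with positive growth into an actual density increase, and use duality/a careful decomposition when no such flex exists to extract the jammed subpacking. That machinery, rather than bookkeeping on the rattler set, is what is missing from your sketch.
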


We call the packing disks, minus the rattlers, a \emph{spine} of the jammed packing.

A proof of this statement and Corollary \ref{cor:rigid-count}, later, can be found in \cite{basics, Donev-Torquato-Connelly}. The idea goes back at least to Danzer \cite{Danzer}, and it works for all dimensions for all compact surfaces of constant curvature, except, interestingly, the proof of the ``only if" part fails for surfaces of positive curvature, such as the sphere.  The infinitesimal flex can be used to define a continuous flex for the whole configuration and the higher-order terms work in our favor.  The ``if" part is standard.

Another closely related property is when the lattice $\Lambda$ itself is allowed to move.  Following  \cite{Donev-Torquato-Connelly}, we say that a packing in a torus $\T^2=\R^2/\Lambda$  is \emph{strictly jammed} if it is rigid allowing both the configuration and the lattice $\Lambda$ to move locally with the constraint that the total area of $\T^2$ not increase.  Here we will concentrate mostly on collective jamming.

\section{Basic rigidity of tensegrities} \label{section:rigidity}

The rigidity part of Section \ref{section:jammed} can be rephrased in the language of frameworks and tensegrities.  One is given a finite configuration of points $\pn$, in our case in the $2$-torus $\T^2$, and a \emph{tensegrity graph} $G$, where each edge of $G$ is defined to be a \emph{cable}, which is not allowed to increase in length, or a \emph{strut}, which is not allowed to decrease in length, and a \emph{bar}, which is not allowed to change in length.  In our case the packing graph consists entirely of struts.  

The next tool we need is the concept of a \emph{stress} for the graph,  which is just a scalar $\omega_{ij}=  \omega_{ji}$ assigned to each edge $\{i,j\}$ of $G$.   We say that a stress $\omega = (\dots, \omega_{ij}, \dots)$ is an \emph{equilibrium stress} if for each vertex $i$ of $G$ the following holds
\[
\sum_{j} \omega_{ij}(\p_i-\p_j)=0,
\] 
where  $\omega_{ij}=0$ for non-edges $\{i,j\}$.  Furthermore, we say that a stress for a graph $G$ is a \emph{strict proper stress} if $\omega_{ij} > 0$ for a cable $\{i,j\}$, and $\omega_{ij} < 0$ for a strut $\{i,j\}$.  There is no condition for a bar.  The following is a basic duality result of Roth and Whiteley \cite{Roth-Whiteley}.

\begin{theorem}\label{thm:Roth-Whiteley} A tensegrity is infinitesimally rigid if and only if the underlying bar framework is infinitesimally rigid and there is strict proper equilibrium stress.
\end{theorem}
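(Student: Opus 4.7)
The plan is to prove the two directions separately. The ($\Leftarrow$) direction reduces to a single stress--flex pairing identity, while the ($\Rightarrow$) direction requires a finite-dimensional separation theorem to extract the equilibrium stress from infinitesimal rigidity.

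For ($\Leftarrow$), assume the bar framework is infinitesimally rigid and let $\omega$ be a strict proper equilibrium stress. Given any infinitesimal flex $\p'$ of the tensegrity, I would take the vertex equilibrium equation $\sum_j\omega_{ij}(\p_i-\p_j)=0$, dot it with $\p'_i$, sum over $i$, and use $\omega_{ij}=\omega_{ji}$ to symmetrize into
\[
\sum_{\{i,j\}\in E}\omega_{ij}\,(\p_i-\p_j)\cdot(\p'_i-\p'_j)=0.
\]
On a cable, $\omega_{ij}>0$ and the inner product is $\le 0$; on a strut, both signs flip; on a bar the inner product vanishes. Every summand is nonpositive, so each vanishes individually. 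Since $\omega_{ij}\ne 0$ on cables and struts, $(\p_i-\p_j)\cdot(\p'_i-\p'_j)=0$ on every edge. Hence $\p'$ is an infinitesimal flex of the bar framework, and therefore trivial.

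For ($\Rightarrow$), suppose the tensegrity is infinitesimally rigid. That the bar framework is also infinitesimally rigid is immediate: any bar flex satisfies each tensegrity inequality with equality, so is itself a tensegrity flex, hence trivial. The substantive point is the existence of a strict proper equilibrium stress, which I would prove by contradiction. Let $R$ denote the rigidity matrix, let $S=\ker R^T\subset\R^E$ be the linear space of equilibrium stresses, and let
\[
Q=\{\omega\in\R^E:\omega_{ij}>0\text{ on every cable},\ \omega_{ij}<0\text{ on every strut}\}
\]
(with no constraint on bar coordinates), an open convex cone. Assume $S\cap Q=\emptyset$. Hahn--Banach in $\R^E$ then produces a nonzero linear functional $f$ vanishing on the subspace $S$ and nonnegative on $Q$, hence on $\overline Q$. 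Because $S^\perp=\mathrm{image}(R)$, we have $f=R\p'$ for some velocity $\p'$. Reading off the sign constraints imposed by $\overline Q$ edge by edge gives $(R\p')_{ij}\ge 0$ on cables, $(R\p')_{ij}\le 0$ on struts, and $(R\p')_{ij}=0$ on bars, so $-\p'$ is an infinitesimal flex of the tensegrity. It is nontrivial because a trivial $\p'$ would satisfy $R\p'=0$, contradicting $f\ne 0$. This contradicts the assumed infinitesimal rigidity of the tensegrity.

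The main obstacle is the separation step: one must correctly set up $Q$ as an open convex cone disjoint from the subspace $S$ and then match the sign conditions coming out of the Hahn--Banach functional with the cable, strut, and bar inequalities defining an infinitesimal flex. Once the bookkeeping is in place, this is just Hahn--Banach (equivalently, a Gordan--Motzkin transposition theorem) in $\R^E$; the $(\Leftarrow)$ direction is a direct computation with the stress--flex pairing.
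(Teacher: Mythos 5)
Your proof is correct, and it is essentially the standard argument: the paper itself states Theorem \ref{thm:Roth-Whiteley} without proof, citing Roth and Whiteley, whose proof is exactly this duality — the stress--flex pairing $\sum_{\{i,j\}}\omega_{ij}(\p_i-\p_j)\cdot(\p'_i-\p'_j)=0$ for the easy direction, and a separating-hyperplane (Farkas/Gordan-type) argument identifying $(\ker R^T)^\perp$ with the image of the rigidity matrix for the converse. Your sign bookkeeping on cables, struts, and bars and the nontriviality step (a trivial flex lies in $\ker R$, forcing $f=0$) are both handled correctly, so nothing is missing.
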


Since infinitesimal rigidity involves the solution to a system of linear equations and inequalities, we have certain relationships among the number of vertices of a tensegrity, say $n$, the number of edges, $e$, and the dimension of the space of equilibrium stresses $s$.  Recall that the dimension of the space of trivial infinitesimal flexes is $2$, given by translations in the torus.

\begin{proposition}\label{proposition:count} For an infinitesimally rigid tensegrity on a torus $\T^2$ with all struts, 
\[
e \ge 2n-1\,\,\,\,\, \text{and} \,\,\,\,\,s= e- (2n-2).
\]
\end{proposition}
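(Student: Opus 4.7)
The plan is to read off both inequalities from the rigidity matrix via rank-nullity, using Theorem \ref{thm:Roth-Whiteley} to control its kernel and cokernel.

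First I would set up the rigidity matrix $R({{\bf P}})$ of size $e \times 2n$, whose row for the edge $\{i,j\}$ records the vector $\p_i-\p_j$ in the block for vertex $i$ and $\p_j-\p_i$ in the block for vertex $j$, and zeros elsewhere. By construction, an infinitesimal flex $\p'$ (regarded for this purpose as a bar framework, ignoring the strut inequality) satisfies $R\p' = 0$, and the space of equilibrium stresses is exactly the left nullspace of $R$, of dimension $s$. Rank-nullity gives the two identities
\[
\dim\ker R + \operatorname{rank} R = 2n, \qquad s + \operatorname{rank} R = e.
\]

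Next I would invoke Theorem \ref{thm:Roth-Whiteley}: since the all-strut tensegrity is infinitesimally rigid, the underlying bar framework is also infinitesimally rigid. On the torus this means $\ker R$ consists exactly of the trivial flexes, i.e.\ constant vectors $\p'_i \equiv \v$. This kernel is $2$-dimensional (the two translational degrees of freedom of $\T^2$), so $\operatorname{rank} R = 2n-2$, and substituting into the second identity yields $s = e - (2n-2)$.

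Finally, for the inequality $e \ge 2n-1$, I would use the second half of Theorem \ref{thm:Roth-Whiteley}: infinitesimal rigidity of an all-strut tensegrity requires the existence of a strict proper equilibrium stress, one with $\omega_{ij} < 0$ on every edge. In particular such a stress is nonzero, so $s \ge 1$, and combining with $s = e-(2n-2)$ gives $e \ge 2n-1$. I do not foresee a real obstacle here; the one subtlety worth flagging in the write-up is that on the torus the space of trivial flexes has dimension exactly $2$ (there are no rotational trivial flexes, since rotations are not isometries of the flat torus), which is what pins down the constant $2n-2$ rather than the familiar $2n-3$ from the Euclidean plane.
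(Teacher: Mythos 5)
Your proof is correct and follows essentially the same route as the paper: the paper's brief justification is exactly this count ($2n$ configuration variables, a $2$-dimensional space of trivial translational flexes on the torus, and at least one self stress forced by the all-strut condition), which you have simply made precise via the rigidity matrix, rank–nullity, and Theorem \ref{thm:Roth-Whiteley}. Your flagged subtlety—that the trivial flexes on $\T^2$ are only the two translations, giving $2n-2$ rather than $2n-3$—matches the paper's convention stated just before the proposition.
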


\begin{corollary}\label{cor:rigid-count}If a packing ${{\bf P}}$ in a torus $\T^2$ is collectively jammed with $n$ disks and $k$ contacts, then $k \ge 2n-1$, and further when ${{\bf P}}$ is collectively jammed, it has exactly one stress if and only if $k=2n-1$. 
\end{corollary}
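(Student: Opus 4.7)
The plan is to derive the corollary as an essentially immediate consequence of chaining Theorem~\ref{theorem:Danzer} (Danzer's equivalence) with Proposition~\ref{proposition:count}. The packing graph $G(\mathbf{P})$ has only struts (disks cannot interpenetrate, so each contact is a strut in the tensegrity sense), so the hypotheses of Proposition~\ref{proposition:count} will be available as soon as we know $G(\mathbf{P})$ is infinitesimally rigid.

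First I would invoke Theorem~\ref{theorem:Danzer}: since $\mathbf{P}$ is collectively jammed in $\T^2$, the all-strut tensegrity $G(\mathbf{P})$ on $n$ vertices (the disk centers) and $k$ edges (the contacts) is infinitesimally rigid. This is the only step that uses the packing structure; after it, everything is combinatorial rigidity theory.

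Next I would apply Proposition~\ref{proposition:count} directly to $G(\mathbf{P})$, with $e=k$. This yields the two conclusions we need in one shot: the inequality $k\geq 2n-1$, which is the first claim of the corollary, and the identity
\[
s = k - (2n-2),
\]
where $s$ is the dimension of the space of equilibrium stresses. For the second claim, I would interpret ``exactly one stress'' in the sense used in the abstract and in Section~\ref{section:rigidity}, namely that the space of equilibrium stresses is one-dimensional, i.e.\ $s=1$. Substituting into the displayed identity gives $s=1 \iff k - (2n-2)=1 \iff k = 2n-1$, proving both directions of the equivalence.

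There is really no main obstacle here; the corollary is a bookkeeping consequence of Proposition~\ref{proposition:count} once Theorem~\ref{theorem:Danzer} has translated collective jamming into infinitesimal rigidity. The only mild subtlety is the interpretation of the phrase ``exactly one stress'' as a one-dimensional stress space rather than a single vector (stresses form a vector space, so uniqueness is only meaningful up to scaling), and the fact that existence of at least one nonzero stress, i.e.\ $s\geq 1$, is automatic: Theorem~\ref{thm:Roth-Whiteley} applied to the all-strut tensegrity $G(\mathbf{P})$ guarantees a strict proper (here, strictly negative) equilibrium stress, so $k \geq 2n-1$ even without invoking the counting identity directly.
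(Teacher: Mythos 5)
Your proposal is correct and takes essentially the same route as the paper: the paper also treats the corollary as an immediate consequence of Theorem~\ref{theorem:Danzer} (collectively jammed $\Leftrightarrow$ infinitesimally rigid) combined with the count of Proposition~\ref{proposition:count}, sketching exactly your dimension count ($2(n-1)+1=2n-1$ constraints after pinning one disk, plus one for the self stress) and citing \cite{basics, Donev-Torquato-Connelly} for details. Your closing remark that $s\ge 1$ comes from Theorem~\ref{thm:Roth-Whiteley} applied to the all-strut tensegrity is precisely the reason the bound is $2n-1$ rather than $2n-2$, so nothing is missing.
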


The idea is that there are $2n$ variables describing the configuration of the disk centers.  One packing disk can be pinned to eliminate trivial translations, and at least one extra constraint must be added to insure a self stress, $2(n-1) +1=2n-1$ constraints, corresponding to contacts altogether.

With isostatic packings, the stress space is one-dimensional, assuming the packing is rigid, not counting rattlers, if and only if $e=2n-1$.  In the granular material literature, this situation is called \emph{isostatic}.  However, in the mechanical engineering literature a bar tensegrity (framework) is called isostatic if it is infinitesimally rigid and it has no non-zero equilibrium stress, because when the framework is subjected to an external load, it can ``resolve" that load with a unique single internal stress.  So for packings, the isostatic conjecture is that when the packing disks are sufficiently generic, then there is only a one-dimensional equilibrium stress and $e=2n-1$.

Notice that the packing of the $6$ disks in Figure \ref{fig:rattler}, with the rattler missing, has $12 = 2\cdot 6$ edges and so is not isostatic as we have defined it above.  All the disks have the same radius, and so are not generic.  By contrast, the packing graph of $2$ disks in Figure \ref{fig:2-disks-slant} in a torus with a slanted lattice has $2\cdot 2 - 1= 3$ edges and when it is jammed, and so it is isostatic.  On the other hand, the packing in Figure  \ref{fig:2-disks-square} in the  torus defined by a rectangular lattice has $4$ contacts and is not isostatic even though the ratio of the radii are generic and the lattice has one free parameter in the space of rectangular lattices.  So in this case, the isostatic conjecture is false, if one insists on choosing that subset of the possible lattices.  The moral of this story is that all the lattice parameters should be included in the generic condition.
\begin{figure}[!htb]
    \centering
    \begin{minipage}{.5\textwidth}
        \centering
        \includegraphics[width=0.5\linewidth]{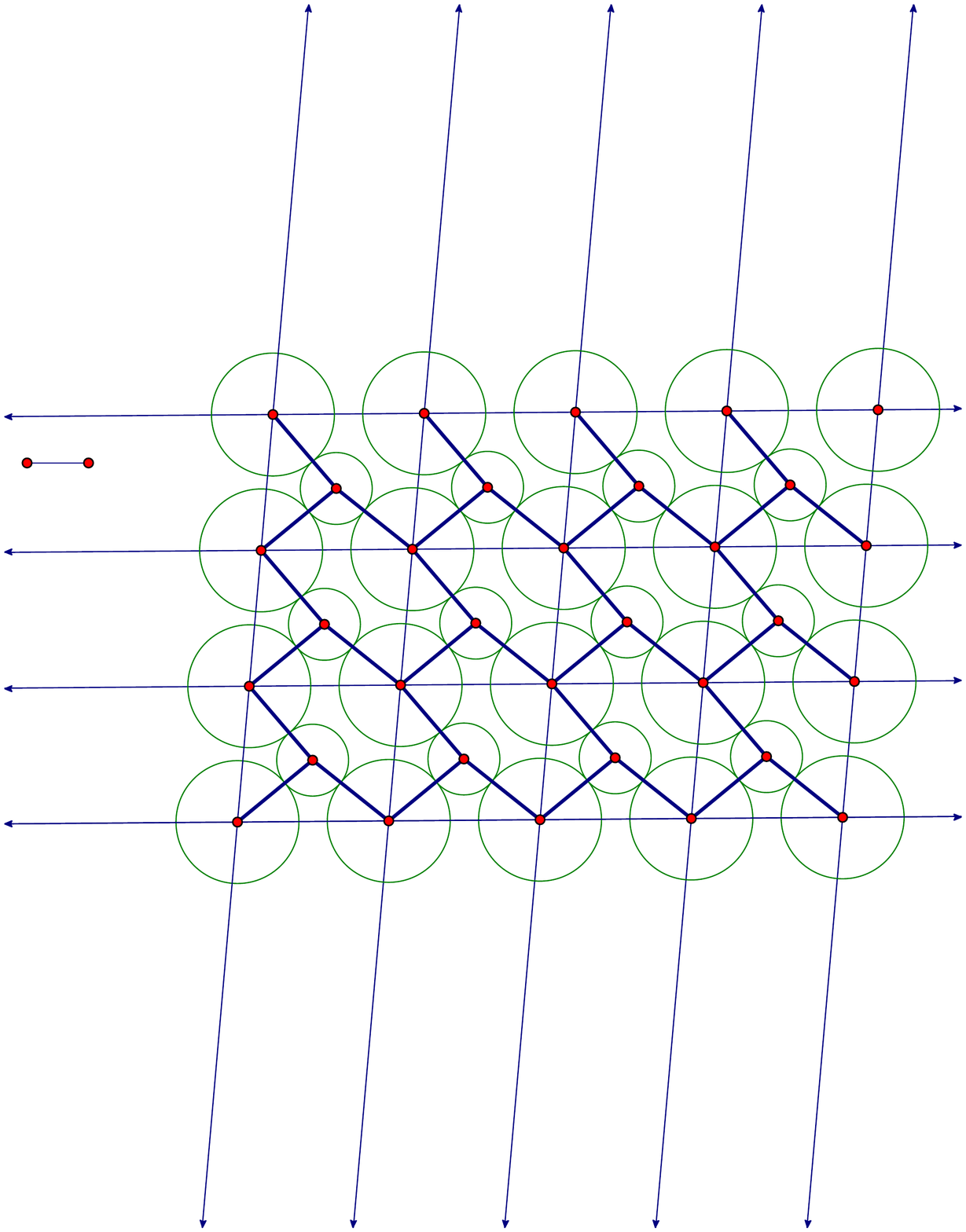}
        \caption{The slanted torus, with an \\ isostatic packing.}
        \label{fig:2-disks-slant}
    \end{minipage}%
    \begin{minipage}{0.5\textwidth}
        \centering
        \includegraphics[width=0.65\linewidth]{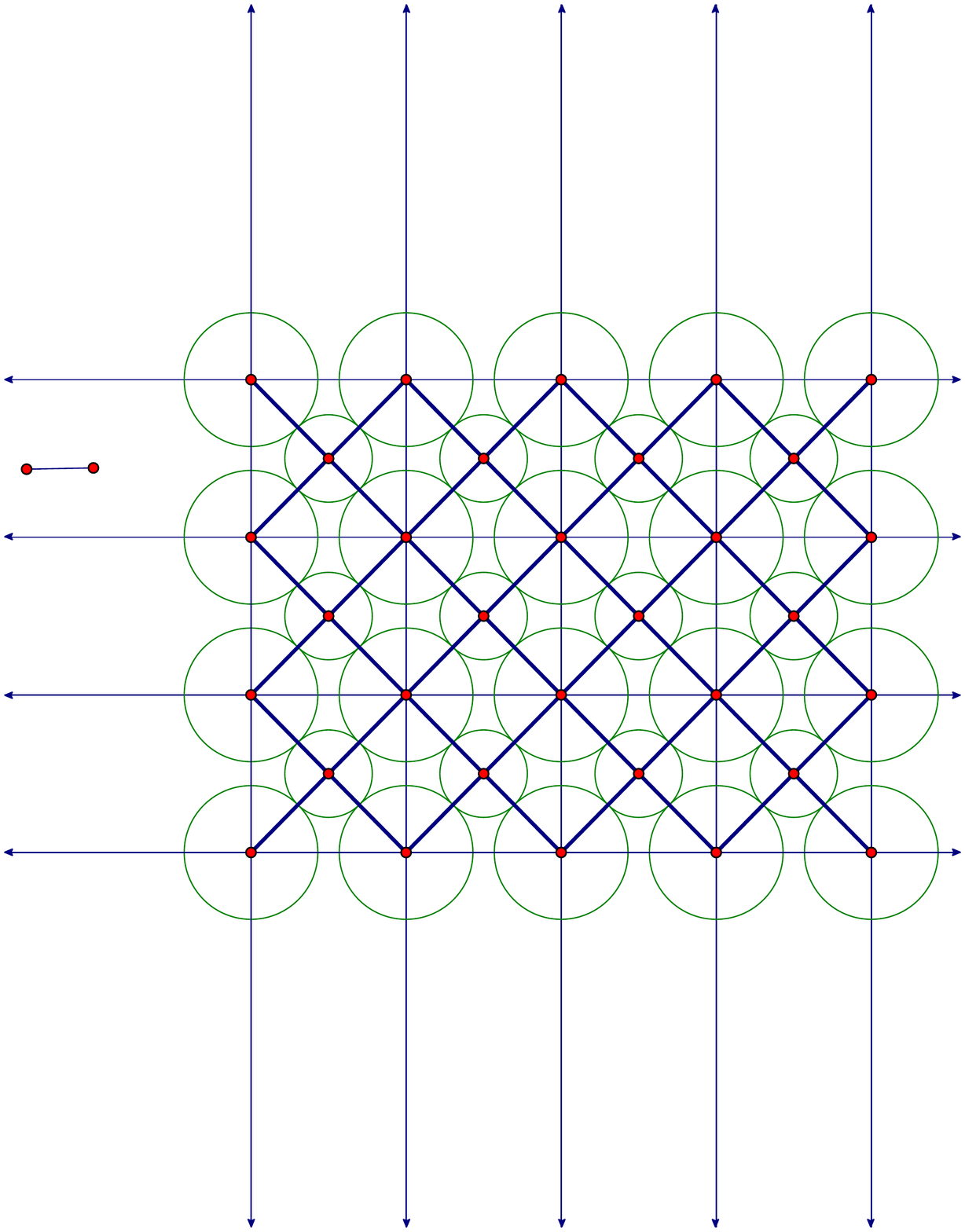}
        \caption{The rectangular torus, with a non-isostatic packing.}
        \label{fig:2-disks-square}
         \end{minipage}\\%
    \end{figure}

\section{Coordinates} \label{section:coordinates}

In order to do calculations later, we will describe the lattice, radii, and configuration in terms of canonical coordinates.

\begin{enumerate}[(a)]
	\item The lattice $\Lambda = \{ z_1 \lambda_1 + z_2 \lambda_2 \}$ where $\lambda_1=(a,0), \lambda_2=(b,c)$, and $z_1, z_2$ are integers $a,b,c >0$.  The dimension of $\mathcal L$ of all such lattices is $3$.  The dimension of such lattices with a fixed determinant, say $1$, is $2$.
	\item The configuration $\mathcal C$ is the set of $\pn$, where $\p_1=(0,0)$, and otherwise $\p_2, \dots, \p_n$ are free, and not constrained.  Note that a point $\p_i$ is defined to be equivalent to the point $\q_i$ if $\p_i +\Lambda = \q_i +\Lambda$ as sets.   The dimension of $\mathcal C$ is $2(n-1)=2n-2$.
	\item If the radius of the $i-$th disk is $r_i>0$, we denote the vector of radii as $\r=(r_1, \dots, r_n)$.  Later we will be interested in the relative ratios of these disk radii.  So we will denote the set of ratios as $\mathcal R = \{(1, r_2/r_1, \dots, r_n/r_1) \mid r_i >0 \}$. For any $\r=(r_1, \dots, r_n)$, define  $\bar{\r}= ( r_2/r_1, \dots, r_n/r_1)$.  So the dimension of $\mathcal R$ is $n-1$. 
\end{enumerate}

A packing ${{\bf P}}$ is described uniquely by all the coordinates above, where  $|\p_i-\p_j| = r_i+r_j$, for all $i,j$.  The space of all such packings will be denoted by ${\Pa}$.  If we fix the lattice $\Lambda$, then the corresponding space of such packings will be denoted by ${\Pa(\Lambda)}$.  Similarly if we additionally fix the radius ratios $\bar{\r}$, we denote that restricted packing space as ${\Pa(\Lambda,\bar{\r})}$.

\section{Dimension calculations} \label{section:dimension}

When we have a collectively jammed, or a locally maximally dense, packing, we would like to perturb the parameters, the radii and lattice, and still maintain that property.  In the following we will assume that the lattice $\Lambda$ is constant with determinant $1$.  So the packing is determined completely by the $n$ centers $\p$ of the packing disks and $\r$, the $n$ radii of the packing disks.  The pair $(\p, \r)$ determine a packing uniquely if and only if for all $i, j$ between $1$ and $n$ and $r_i >0$, 
\begin{equation}\label{eqn:packing}
|\p_i-\p_j| \ge r_i +r_j.
\end{equation}  
The density of $(\p, \r)$ is $\rho = \rho(\p, \r)= \pi \sum_i r_i^2=\rho(\r)$, which only depends on $\r$ and is clearly continuous.



\begin{proposition}\label{prop:continuity}  Suppose that $(\p, \r)$ is locally maximally dense in a torus given by the lattice $\Lambda$.  Let $\epsilon > 0$ be given.  Then there is a $\delta > 0$ such that for any packing $(\q, \s)$ such that $|\q-\p| < \delta$ and $|\s-\r| < \delta$, there is a locally jammed packing $(\q(1), \s(1))$ with  $|\q(1)-\p| < \epsilon$, and $\bar{\s}=\bar{\s(1)}$.
\end{proposition}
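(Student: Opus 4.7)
The plan is to apply the Monte Carlo inflation process of Section~\ref{section:jammed} to the perturbed packing $(\q, \s)$. Fix the radius ratios at $\bar{\s}$ and continuously scale all radii by a common factor $t \ge 1$, simultaneously moving the configuration to keep the disks disjoint, until no further increase is possible. Since the total disk area cannot exceed the area of the torus, the process terminates at some $t_1 \ge 1$ and produces a locally maximally dense packing $(\q(1), \s(1)) := (\q(t_1), t_1\s)$, which by construction satisfies $\bar{\s(1)} = \bar{\s}$. The content of the proposition then reduces to the bound $|\q(1) - \p| < \epsilon$.

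To secure that bound I would combine two estimates. First, the locally maximally dense hypothesis on $(\p, \r)$ gives an $\eta > 0$ such that every packing $(\q', \s')$ with $|\q' - \p| < \eta$ and $\bar{\s'} = \bar{\r}$ has $\rho(\s') \le \rho(\r)$; by continuity of $\rho$, every such packing with $\bar{\s'}$ merely close to $\bar{\r}$ satisfies $\rho(\s') \le \rho(\r) + o(1)$ as $\bar{\s'} \to \bar{\r}$. Hence, \emph{as long as} the inflation path $(\q(t), t\s)$ stays inside the $\eta$-ball around $\p$, the inequality $t_1^2\,\rho(\s) \le \rho(\r) + o(1)$ forces $t_1 - 1 = o(1)$ as $\delta \to 0$. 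Second, during the inflation one may take the instantaneous configurational velocity $\dot{\q}(t)$ to be the minimum-norm infinitesimal motion that accommodates the radial growth $\dot{\s}(t)$; this yields a piecewise bound $|\dot{\q}(t)| \le C(t)\,|\dot{\s}(t)|$ with $C(t)$ uniformly bounded on the $\eta$-ball. Integrating gives $|\q(t_1) - \q| = O(t_1 - 1) = o(1)$, so $|\q(1) - \p| \le |\q(1) - \q| + |\q - \p| = o(1) < \epsilon$ once $\delta$ is small.

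The main obstacle is that both estimates above are conditional on the inflation path staying inside $B_\eta(\p)$. I would close the loop by a bootstrap on the first exit time $t^*$ from that ball: on $[1, t^*]$ the path lies in the closed $\eta$-ball, so the previous estimates apply up to $t^*$ and yield $|\q(t^*) - \p| \le |\q(t^*) - \q| + |\q - \p| = o(1)$ as $\delta \to 0$, which is strictly less than $\eta$ for $\delta$ small — contradicting $|\q(t^*) - \p| = \eta$. The technical subtlety is controlling $C(t)$ across the isolated times when the contact graph changes, where the minimum-norm flex is only piecewise smooth. One must show that $C(t)$ does not blow up at these transitions, which amounts to a uniform lower bound on the smallest singular value of the relevant rigidity matrices over the finite family of contact-graph topologies that can arise in a neighborhood of $(\p, \r)$; rattlers, which contribute no contacts, can be placed separately within their cages and absorbed into this estimate.
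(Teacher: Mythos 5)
Your first step (inflate the perturbed packing with its ratios $\bar{\s}$ held fixed until it jams) matches the paper, but the way you try to force $|\q(1)-\p|<\epsilon$ has a genuine gap: the claimed velocity bound $|\dot{\q}(t)|\le C(t)\,|\dot{\s}(t)|$ with $C(t)$ uniformly bounded on the $\eta$-ball is not available. The quantity you need to control is the minimum-norm flex $\q'$ satisfying the inhomogeneous inequality system $(\q_i-\q_j)\cdot(\q_i'-\q_j')\ge \dot r_i+\dot r_j>0$ over the current contacts, and by Theorem \ref{theorem:Danzer} the terminal packing of your inflation is infinitesimally rigid (on its spine), so at $t=t_1$ this system has \emph{no} solution at all; consequently the feasible set shrinks to empty as $t\to t_1$ and the minimum-norm accommodating motion generically blows up near jamming. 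A uniform lower bound on the smallest singular value of the rigidity matrices over the finitely many nearby contact graphs does not repair this, because the obstruction is the infeasibility of an inequality system with a strictly positive right-hand side, not ill-conditioning of a square linear system; so the integrated estimate $|\q(t_1)-\q|=O(t_1-1)$, on which your bootstrap at the exit time rests, is unjustified. A secondary soft spot: transferring the local-maximal-density inequality from packings with ratio exactly $\bar{\r}$ to packings with nearby ratios is itself a compactness argument (a limit of counterexamples with ratios tending to $\bar{\r}$), not a pointwise continuity statement about $\rho$, so the ``$\rho(\s')\le\rho(\r)+o(1)$'' step needs to be argued, not asserted.

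The paper's proof shows why none of this quantitative control is needed, and this is the idea your proposal is missing. Collective jamming gives a compact rigidity neighborhood $C$ in which the \emph{only} packing with ratios $\bar{\r}(0)$ and density at least $\rho(\r(0))$ is the original one; since $C$ is compact and the constraint set $E$ is closed, for every $\epsilon>0$ there is $\delta>0$ such that any packing in $C$ satisfying the $\delta$-slackened conditions (\ref{near-packing-condition})--(\ref{near-density-condition}) already lies in the $\epsilon$-neighborhood $U_\epsilon$ of $(\p(0),\r(0))$. The inflation process fixes $\bar{\s}$ and only increases density, so \emph{every} point of the inflation path, not just its endpoint, satisfies the slackened conditions and therefore stays in $U_\epsilon$; no estimate on how fast the centers move, and no analysis of contact-graph transitions or rattlers along the path, is required. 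If you want to salvage your route, you would in effect have to prove an integrable bound on the blow-up of the accommodating flex near jamming, which is harder than the proposition itself; the qualitative rigidity-neighborhood argument is the intended mechanism.
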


\begin{proof} Fix $\bar{\r}(0)$ and any configuration $\p(0)$ such that $(\p(0), \r(0))$ is a packing, i.e. it satisfies (\ref{eqn:packing}), and such that it is collectively jammed.  If there are any rattlers, we can deal with the collectively jammed subset, which we call the \emph{the spine}.  So we can assume that  $(\p(0), \r(0))$ is collectively jammed.  Let $\mathcal{P}$ be the space of packings of $n$ disks in the $\Lambda$ torus given by $(\p,\r)$, which corresponds to  $(\p,s,\bar{\r})$, where $s=r_1$.

We define a \emph{constraint space} $\mathcal{E}=((\dots l_{ij} \dots), (r_1, r_2,\dots, r_n))$, where $l_{ij}$ are positive real variables that correspond to edges of the contact graph on $n$ vertices, and $\r=(r_1, r_2,\dots, r_n)$ are also positive real variables that correspond to the disks in the packing.   We define a set $E \subset \mathcal{E}$ defined by the following constraints:

\begin{eqnarray}
l_{ij} &\ge& s(\bar{r}_i + \bar{r}_j)\label{packing-condition}\\
\bar{\r}&=&\bar{\r}(0)\label{radii-condition}\\
\rho(\r) &\ge& \rho(\r(0))\label{density-condition}
\end{eqnarray}

We then define a continuous map $f: \mathcal{P} \rightarrow \mathcal{E}$, by 
\[f(\p,\bar{\r},s) = (|\p_i-\p_j| \dots, \bar{\r}, r_1).
\]

Since $(\p(0), \r(0))$ is collectively jammed, there is compact neighborhood $C \subset \mathcal{P}$, the \emph{rigidity neighborhood} such that $f^{-1}({E})\cap C$ is just $\{(\p(0), r_1, \bar{\r}(0))\}$, that is the packing given by $(\p(0), \r(0))$.

Define the $\delta$ neighborhood of ${E}$, ${E}_{\delta}$ by the conditions

\begin{eqnarray}
l_{ij} &>& s(\bar{r}_i + \bar{r}_j)-\delta \label{near-packing-condition}\\
|\bar{\r}-\bar{\r}(0)| &<& \delta \label{near-radii-condition}\\
\rho(\r) &>& \rho(\r(0))-\delta \label{near-density-condition}
\end{eqnarray}

Note that Conditions (\ref{near-packing-condition}),   (\ref{near-radii-condition}), (\ref{near-density-condition})  correspond to a slackened versions of (\ref{packing-condition}), (\ref{radii-condition}), (\ref{density-condition}), respectively.  Then for every $\epsilon > 0$, there is a $\delta >0$ such that $f(\q,\bar{\r},s) \in {E}_{\delta}$ implies that $|(\p(0),\r(0))-(\q,\r)| < \epsilon$.  That is $f^{-1}({E}_{\delta})\cap C \subset U_{\epsilon}$, the $\epsilon$ neighborhood of  $(\p(0),\r(0))$. This is due to $C$ being compact and $E$ closed.  See \cite{Connelly-energy}, Theorem 1, for a similar argument.   

Next start with any packing $(\q,\r) \in f^{-1}({E}_{\delta})\cap C \subset U_{\epsilon}$ that maps to ${E}_{\delta}$ and continuously increase its density $\rho(\r)$ fixing $\bar{\r}$ until it reaches a local maximum, where it becomes locally maximally dense.  During this process, the packing will always satisfy (\ref{near-packing-condition}),   (\ref{near-radii-condition}), (\ref{near-density-condition}), and therefore be a local maximally dense packing $(\q(1),\r(1))$, remaining in  $U_{\epsilon}$.  See Figure \ref{fig:rigidity-nbd} for a visualization of this process.  \qed

 \begin{figure}[!htb]
    \centering
        \includegraphics[width=0.8\textwidth]{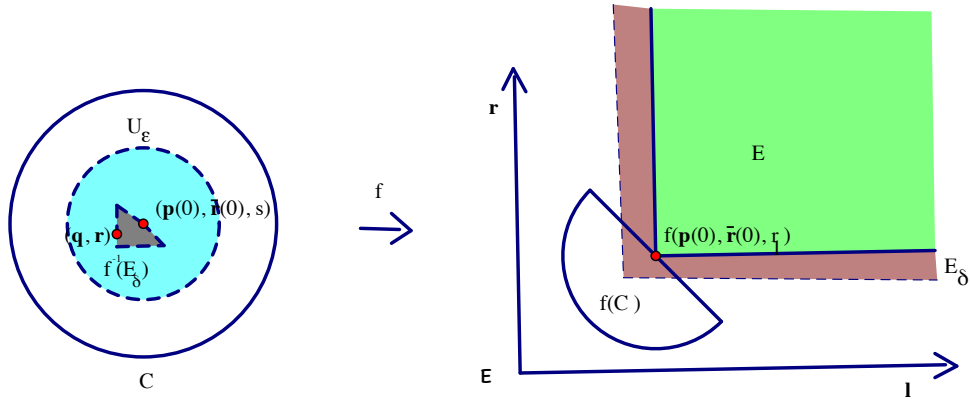}%
        \captionsetup{labelsep=colon,margin=2cm}
         \caption{This shows the argument in the proof of Proposition \ref{prop:continuity}.  The vertical direction in this figure also represents an increase in density $\rho$.}
\label{fig:rigidity-nbd}
    \end{figure}

\end{proof}

\begin{corollary} Under the same assumptions as Proposition \ref{prop:continuity}, for any radius ratio in  $\mathcal R$ within $\delta$ of $\bar{\r}$, there is a locally maximally dense packing $(\q, \s)$ such that  $\bar{\s}$ is that radius ratio, and $\q$ is within $\delta$ of $\p$.  Furthermore, the lattice can be perturbed by $\delta$ as well, with the same conclusion.
\end{corollary}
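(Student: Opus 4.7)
The plan is to deduce both statements directly from Proposition \ref{prop:continuity} by producing, for each prescribed radius ratio (and, in the second part, each prescribed lattice), an honest packing that sits inside the hypothesis window of that proposition; the proposition will then deliver the locally maximally dense packing automatically.

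Given a target ratio $\bar{\s}\in \mathcal{R}$ close to $\bar{\r}$, form the radius vector $\s = r_1(1,\bar{s}_2,\dots,\bar{s}_n)$. By construction $\s$ has radius ratio $\bar{\s}$, and taking $|\bar{\s}-\bar{\r}|$ small makes $|\s-\r|$ as small as we like. The obstacle to applying Proposition \ref{prop:continuity} with the input $(\p,\s)$ is that this pair need not be a packing: at $(\p,\r)$ the contact edges satisfy $|\p_i-\p_j| = r_i+r_j$ with equality, so even a tiny increase of radii violates \eqref{eqn:packing}. The remedy is to scale, replacing $\s$ by $(1-\eta)\s$ for a small $\eta > 0$: since $s_i+s_j$ is close to $r_i+r_j$ at every contact, any $\eta$ slightly larger than $\max_{ij}(s_i+s_j-r_i-r_j)/(s_i+s_j)$ converts those equalities into strict inequalities, so $(\p,(1-\eta)\s)$ is a genuine packing. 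Uniform scaling does not affect ratios, so its ratio is still $\bar{\s}$, and by choosing both $\eta$ and $|\bar{\s}-\bar{\r}|$ small it lies inside the $\delta_0$-window supplied by Proposition \ref{prop:continuity} (applied with $\epsilon=\delta$). The conclusion of the proposition produces a locally maximally dense $(\q(1),\s(1))$ with $\overline{\s(1)} = \bar{\s}$ and $|\q(1)-\p|<\delta$, as required.

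For the lattice statement, observe that every ingredient in the proof of Proposition \ref{prop:continuity}---the constraint space $\mathcal{E}$, the map $f:\Pa\to \mathcal{E}$, the rigidity neighborhood $C$, and the density function $\rho$---depends continuously on the lattice parameter $\Lambda=\Lambda(a,b,c)\in \mathcal{L}$. Re-running the argument with $\Lambda$ as an additional free parameter raises no new issue: the slackened conditions \eqref{near-packing-condition}--\eqref{near-density-condition} can be chosen uniformly for $\Lambda'$ in a small neighborhood of $\Lambda$, and the inflation step is carried out inside each perturbed torus $\R^2/\Lambda'$. Combining the shrink-then-inflate construction above with a simultaneous small perturbation of $\Lambda$ yields the claimed locally maximally dense packing.

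The main technical obstacle I foresee is checking that the compactness of the rigidity neighborhood $C$, which was the crux of the original proposition, is preserved under simultaneous perturbation of $\Lambda$ and of $\bar{\r}$; this reduces to noting that $C$ varies continuously in $\Lambda$ (its defining inequalities are continuous in the lattice generators) and that one can shrink $C$ uniformly so that the inflation of $\rho(\r)$ terminates strictly inside $C$ before any contact inequality is violated. Everything else is a routine continuity argument.
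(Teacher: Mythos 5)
Your proposal is correct and follows essentially the same route as the paper: the paper also shrinks the radii slightly to obtain a feasible packing whose ratio vector is the prescribed one (its $\r'$ plays the role of your $(1-\eta)\s$) and then invokes Proposition \ref{prop:continuity}, handling the lattice perturbation by the same continuity/limiting argument with $\Lambda$ treated as an additional parameter. Your discussion of the lattice case is, if anything, slightly more explicit than the paper's one-line remark.
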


\begin{proof}Apply Proposition \ref{prop:continuity} to the packing $(\p, \r')$, where all the radii of $\r'$ are strictly smaller than those of $\r$ but $\bar{\r}'$ still close to $\bar{\r}$, which is a feasible packing, but such that the ratios are the given ratio in $\mathcal R$.  Similarly, one can alter the lattice by a sufficiently small amount and apply the same limiting argument to the altered lattice converging to $\Lambda$. \qed
\end{proof}
\bigskip

Let $X(n, CJ)$ be the space of collectively jammed packings with $n$ disks.  Note that this space is quantified over all lattices, and dimension of such lattices, by the definition $(a)$, is two, where packings are identified with the configuration $\p$, and radii $\r$.  
We assume that there is at least one collectively jammed packing.

\begin{corollary}\label{cor:CJ} The dimension of $X(n, CJ)$ is $n+1$.
\end{corollary}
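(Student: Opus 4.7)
My plan is to study the projection
\[
\pi : X(n,CJ) \longrightarrow \mathcal{R}\times \mathcal{L},\qquad (\p,\r,\Lambda)\longmapsto (\bar{\r},\Lambda),
\]
whose target has dimension $(n-1)+2=n+1$. I will show that the image of $\pi$ contains an open set around every point (giving $\dim X(n,CJ)\ge n+1$) and that each fiber is locally discrete (giving $\dim X(n,CJ)\le n+1$), so the two bounds meet.

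The first claim is essentially the Corollary to Proposition \ref{prop:continuity}: for every sufficiently small perturbation of $(\bar{\r},\Lambda)$ near a collectively jammed packing, there is a nearby locally maximally dense packing. For small enough perturbations no disk suddenly becomes a rattler, since infinitesimal rigidity is an open condition once the contact graph is fixed, so the perturbed packing still lies in $X(n,CJ)$ with all $n$ disks in its spine.

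The main work is the discreteness of fibers. Fix $(\bar{\r},\Lambda)$ and, using $\p_1=0$ and $r_i=\bar r_i\, r_1$, consider
\[
G(\p_2,\dots,\p_n,r_1) \;=\; \bigl(\,|\p_i-\p_j|^2-(r_i+r_j)^2\,\bigr)_{\{i,j\}\in E} \;:\; \R^{2n-1}\to\R^k,
\]
where $E$ is the contact graph. I will verify that the Jacobian $dG$ has full column rank $2n-1$, so the zero set is locally isolated. Split $dG$ into the restricted rigidity matrix $R'$ (columns for $\p_2,\dots,\p_n$) and the extra column $v$ with entries $v_{ij}=-2(r_i+r_j)(\bar r_i+\bar r_j)$. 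By Theorem \ref{theorem:Danzer} the full rigidity matrix has its kernel spanned by the $2$-dimensional translation flexes, which vanish after fixing $\p_1=0$, so $R'$ has rank $2n-2$. To see that $v\notin\mathrm{image}(R')$, take any strict proper equilibrium stress $\omega$, supplied by Theorem \ref{thm:Roth-Whiteley}. Such $\omega$ annihilates $\mathrm{image}(R')$, yet
\[
\langle \omega, v\rangle \;=\; -2\sum_{\{i,j\}\in E}\omega_{ij}(r_i+r_j)(\bar r_i+\bar r_j)\;\neq\;0,
\]
because $\omega_{ij}<0$ for every strut while $(r_i+r_j)(\bar r_i+\bar r_j)>0$. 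Appending $v$ therefore raises the rank by one, giving $dG$ full column rank $2n-1$.

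The chief obstacle is this last rank computation: I must exhibit a direction, namely uniform radius scaling, that is genuinely independent of the usual rigidity matrix, and the strict proper stress from Theorem \ref{thm:Roth-Whiteley} is precisely what detects this independence, since its pairing with the radius-derivative column is a sum of terms that all share a single sign. Once this column-independence is secured, the $n-1$ dimensions from $\mathcal R$ and the $2$ from $\mathcal L$ add to the required $n+1$.
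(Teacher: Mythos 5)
Your upper-bound mechanism is sound and in fact more explicit than what the paper itself supplies: the paper's justification (see the proof of Theorem \ref{theorem:dimension}) is essentially the assertion that each nearby choice of $(\bar{\r},\Lambda)$ determines a distinct locally maximally dense packing modulo rattlers, whereas you verify local determination by augmenting the rigidity matrix with the radius-scaling column and pairing it against a strict proper stress from Theorem \ref{thm:Roth-Whiteley}. That sign computation is correct (each term $-2\,\omega_{ij}(r_i+r_j)(\bar r_i+\bar r_j)$ is positive), and it buys an elementary linear-algebra substitute for the paper's later appeal to Guo's inversive-distance rigidity (Corollary \ref{cor:Guo}); note only that bar-framework infinitesimal rigidity, hence the rank $2n-2$ of $R'$, comes from Theorem \ref{thm:Roth-Whiteley} together with Theorem \ref{theorem:Danzer}, not from Theorem \ref{theorem:Danzer} alone.

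Two steps, however, do not do what you announce. First, the rank argument shows that, with $(\bar{\r},\Lambda)$ fixed, the given packing is isolated among packings retaining \emph{every} contact of $E$; it does not exclude nearby collectively jammed packings in the same fiber whose contact graph is a proper subgraph of $E$ (under a small perturbation contacts can only be lost, never gained), so discreteness of the fibers of $\pi$ on all of $X(n,CJ)$ is not established. The repair is to stratify by contact graph: for each of the finitely many graphs $G$ apply your computation at every point of $X(n,CJ(G))$ (each such point carries its own strict proper stress on $G$), conclude $\dim X(n,CJ(G))\le n+1$, and take the maximum over $G$ — consistent with how the paper uses $X(n,CJ(G))$. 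Second, your lower bound claims no rattlers appear because ``infinitesimal rigidity is an open condition once the contact graph is fixed,'' but the packing produced by Proposition \ref{prop:continuity} is obtained by re-inflating to a nearby density maximum and need not have the same contact graph; the paper explicitly warns that contacts may be lost and rattlers created (compare Figures \ref{fig:tricusp-3} and \ref{fig:2-rattlers}), which is why Theorem \ref{theorem:dimension} is stated ``modulo rattlers'' and Corollary \ref{CJ:raterless} carries the hypothesis that no contacts are lost. As written, your justification that a full-dimensional set of parameters is realized by collectively jammed packings with all $n$ disks in the spine is a genuine gap — one the paper itself treats only informally.
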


Notice that the perturbed packing ${(\q,\s)}$ in the proof of Proposition \ref{prop:continuity} may loose some packing contacts from the original and even possibly create some rattlers as in Figures \ref{fig:tricusp-3} and \ref{fig:2-rattlers} in the tricusp case.  In that case, such packings, being not collectively jammed, are not in the set $X(n, CJ)$ and, being of lower dimension than $n+1$, do not contribute to the dimension of $X(n, CJ)$.  
Note that the area of the torus corresponding to the lattice $\Lambda$ is the determinant of the matrix defining $\Lambda$, which is just $ac$ from the definition in Section \ref{section:coordinates}.  With this generality we have the following.

\begin{theorem}\label{theorem:dimension} The dimension of the space of packing radii for locally maximally dense packings in a neighborhood of a fixed collectively jammed configuration with a lattice in the neighborhood of that fixed lattice, and radius ratios in the neighborhood of those fixed ratios, with $n$ disks is $n+1$, modulo rattlers.  
\end{theorem}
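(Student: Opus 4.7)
The plan is to identify the local space of locally maximally dense packings near the given collectively jammed packing with the product space of lattices and radius ratios, $(\Lambda,\bar{\r})$, which has dimension $2+(n-1)=n+1$, by showing that the natural projection is a local bijection modulo translations and rattlers.

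To set up, recall from Section \ref{section:coordinates} that a packing is specified by the lattice $\Lambda$ (2 dimensions once the determinant is fixed), the ratios $\bar{\r}$ ($n-1$ dimensions), the scale $r_1$ (1 dimension), and the configuration $\p$ with $\p_1=\mathbf{0}$ ($2(n-1)$ dimensions), for $3n$ ambient parameters. By Theorem \ref{theorem:Danzer}, such a packing is locally maximally dense if and only if, after discarding rattlers, its contact graph carries a collectively jammed spine.

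For the lower bound I would invoke the corollary of Proposition \ref{prop:continuity}: for every $(\Lambda',\bar{\r}')$ sufficiently close to $(\Lambda,\bar{\r})$, there is a locally maximally dense packing $(\q,\s)$ near $(\p,\r)$ with that lattice and those ratios. Hence the projection from locally maximally dense packings near $(\p,\r)$ to the $(n+1)$-dimensional $(\Lambda,\bar{\r})$-space surjects onto a neighborhood of $(\Lambda,\bar{\r})$, giving dimension at least $n+1$.

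For the matching upper bound I would show that the fibers of this projection are isolated modulo torus translations. Given $(\Lambda',\bar{\r}')$, the density
\[
\rho(\s)=\frac{\pi s_1^2\bigl(1+\bar{s}_2^2+\cdots+\bar{s}_n^2\bigr)}{\det(\Lambda')}
\]
depends only on the overall scale $s_1$ once $\bar{\s}=\bar{\r}'$ and $\Lambda'$ are fixed. Local maximality of density therefore pins down a unique $s_1$ near $r_1$, hence a unique $\s$. With $\Lambda'$ and $\s$ both fixed, Theorem \ref{theorem:Danzer} forces the spine configuration $\q$ to be isolated up to translation. Combining the two bounds gives dimension exactly $n+1$.

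The main obstacle will be controlling the combinatorics of the contact graph under perturbation: contacts can open up, new ones can close, and disks can become rattlers, so the spine graph is not constant across nearby locally maximally dense packings. I would confine the argument to the compact rigidity neighborhood $C$ constructed in the proof of Proposition \ref{prop:continuity}, inside which any maximally dense nearby packing has a collectively jammed spine close to the original; perturbations that strictly gain contacts lie on a lower-dimensional stratum and so do not raise the dimension, while disks no longer attached to the rigid spine are rattlers and are excluded by the ``modulo rattlers'' clause.
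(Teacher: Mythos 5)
Your proposal is correct and follows essentially the same route as the paper: the lower bound via Proposition \ref{prop:continuity} and its corollary (a nearby locally maximally dense packing exists for every nearby lattice and radius-ratio choice), and the upper bound by noting the packing is pinned down, modulo translations and rattlers, once $(\Lambda,\bar{\r})$ is fixed, giving $2+(n-1)=n+1$. Your fiber-discreteness discussion (density fixes the scale, collective jamming isolates the spine configuration) is just a more explicit version of the paper's assertion that each choice of $\Lambda$ and $\bar{\r}$ yields a distinct locally maximally dense packing modulo rattlers, so no further comparison is needed.
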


\begin{proof}  By Proposition \ref{prop:continuity}, for each configuration $\p$,  radius $\r$  and lattice $\Lambda$ that is collectively jammed, and therefore locally maximally dense,  there is a locally maximally dense packing $(\q,\s,\Lambda)$.    Each choice of $\Lambda$ and $\bar{\r}$ has a distinct locally maximally dense packing modulo rattlers.  There are $n-1$ choices for radius ratios and $2$ choices for lattices, $n+1$ in all.  \qed
\end{proof}

Here the rattlers are counted as not contributing to the dimension of the space of packings.  It is as if they were stuck to the rest of the packing.  If they were counted, then each rattler would add $2$ degrees of freedom to their configuration space.  If the given packing  has rattlers they will contribute the same degrees of freedom to each of the approximations, and they can be disregarded.  If the given packing  has no rattlers, it can happen that some of the approximation packings could themselves have rattlers.  But we will see that this cannot happen when the given packing is isostatic.  Next we suppose that the packing is approximated by another with the same graph.

\begin{proposition}\label{proposition:near-rigid} Let ${{\bf P}} \in \Pa$ be any collectively jammed packing with configuration $\p$,  radii $\r$, and lattice $\Lambda$.  Then there is an $\epsilon >0$ such that for any other packing ${{\bf Q}} \in \Pa$ where $|{\bf P}-{\bf Q}|<\epsilon$, and the packing graph of ${{\bf P}}$ is the same as the packing graph of ${{\bf Q}}$, then  ${{\bf Q}}$ is collectively jammed as well.
\end{proposition}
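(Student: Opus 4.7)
The plan is to convert the collective-jamming hypothesis at $\mathbf{P}$ into two open linear-algebraic conditions via Theorem~\ref{theorem:Danzer} and Theorem~\ref{thm:Roth-Whiteley}, and then observe that both conditions are preserved under sufficiently small perturbations that keep the packing graph fixed. Write $G = G(\mathbf{P})$. By Theorem~\ref{theorem:Danzer}, $\mathbf{P}$ is infinitesimally rigid as a strut tensegrity on $G$, and by Theorem~\ref{thm:Roth-Whiteley} this is equivalent to: (i) the underlying bar framework $(G,\p)$ is infinitesimally rigid, and (ii) there exists an equilibrium stress $\omega$ with $\omega_{ij}<0$ on every edge of $G$.

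For condition (i), recall that the relevant rigidity matrix $R(\p)$ has rows indexed by edges and its $\{i,j\}$-row is built from the edge vector $\p_i-\p_j$, which, as noted in Section~\ref{section:jammed}, is well defined in the universal cover even though $\p$ lives on $\T^2$. Infinitesimal rigidity is equivalent to $\mathrm{rank}\,R(\p) = 2n-2$, since the only trivial flexes on the torus are the $2$-dimensional space of translations. Because the entries of $R$ are polynomial in the configuration, having maximal rank is an open condition, so for all $\q$ close enough to $\p$ (and hence for all such $\mathbf{Q}$ with the same graph) the bar framework $(G,\q)$ remains infinitesimally rigid.

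For condition (ii), note that the equilibrium-stress space is precisely the left kernel of $R$. Since (i) shows that $R(\q)$ has locally constant rank near $\p$, the orthogonal projector $\Pi(\q)$ onto $\ker R(\q)^T$ depends continuously on $\q$ (this is the standard fact that the kernel of a locally-constant-rank continuous matrix family varies continuously). Define $\omega(\q):=\Pi(\q)\,\omega$; this is a continuous family of equilibrium stresses for $(G,\q)$ with $\omega(\p)=\omega$. The strict inequality $\omega(\p)_{ij} < 0$ on every edge of $G$ is open, so $\omega(\q)_{ij}<0$ for every edge when $\q$ is close enough to $\p$. Applying Theorem~\ref{thm:Roth-Whiteley} in the other direction, $(G,\q)$ is an infinitesimally rigid strut tensegrity, and then Theorem~\ref{theorem:Danzer} gives that $\mathbf{Q}$ is collectively jammed.

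The one step that needs any real care is the continuous selection of a strict proper stress at $\q$: the mere existence of some stress at $\q$ is not enough, one needs a stress that stays strictly negative on every edge. This is handled by the projector construction, which gives a genuine continuous family $\omega(\q)$ agreeing with $\omega$ at $\p$, so the strict inequalities persist by openness. Everything else is a routine appeal to the previously cited theorems and to the fact that infinitesimal rigidity is an open rank condition.
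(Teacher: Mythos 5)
Your proof is correct, but it takes a genuinely different route from the paper's. The paper argues by contradiction with a compactness argument: if no such $\epsilon$ existed, there would be a sequence of packings ${\bf Q}_j \to {\bf P}$, each failing to be collectively jammed and hence (by Theorem \ref{theorem:Danzer}) each admitting a nontrivial infinitesimal flex $\q'(j)$ satisfying (\ref{eqn:inf-flex}); after normalizing $|\q'(j)|=1$ (with the conventions of Section \ref{section:coordinates} ruling out purely trivial limits), a subsequence of these flexes converges to a nonzero, nontrivial infinitesimal flex of $\p$, contradicting the infinitesimal rigidity of ${\bf P}$. That argument never mentions stresses; it only needs that a limit of flexes for the common graph $G$ is again a flex for $G$. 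You instead split infinitesimal rigidity via Theorem \ref{thm:Roth-Whiteley} and check that both halves are stable under perturbations preserving $G$: bar-framework rigidity because $\mathrm{rank}\,R(\q)=2n-2$ is preserved (note that the local \emph{constancy} of the rank uses both lower semicontinuity of rank and the fact that translations always lie in the kernel, so $2n-2$ is the largest value the rank can take), and the strict proper stress because constant rank makes the orthogonal projector onto the left kernel continuous, so $\omega(\q)=\Pi(\q)\,\omega$ remains strictly negative on every strut for $\q$ near $\p$. Your version is more constructive: it produces an explicit continuously varying strict stress and makes transparent exactly where the same-graph hypothesis enters (fixed matrix shape and fixed strut signs). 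The paper's version is shorter and bypasses the stress side entirely, at the cost of a nonconstructive subsequence extraction and a little care in normalizing away the trivial flexes; both arguments use the same-graph hypothesis in an essential way.
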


\begin{proof}By Theorem \ref{theorem:Danzer}, ${{\bf P}}$ is collectively jammed if and only if it is infinitesimally rigid.  As before, if there is no $\epsilon$ as in the statement, there is a sequence of 
${{\bf Q}}_j, \,j= 1, 2, \dots$ converging to ${{\bf P}}$, each with its own configuration $\q(j)$ and infinitesimal flex $\q'(j)$ satisfying the infinitesimal rigidity constraint (\ref{eqn:inf-flex}).  By renormalizing we can assume that $|\q'(j)|=1$.  So as the $\q(j)$ converge to $\p$, and by taking a subsequence, the $\q'(j)$ converge to a non-zero (and thus non-trivial by the conventions in Section \ref{section:coordinates}) infinitesimal flex of $\p$.  Thus there is such an epsilon as in the conclusion.  \qed
\end{proof}

Suppose that a packing has contact graph $G$, and define $X(n, CJ(G)) \subset X(n, CJ)$ as the set of collectively jammed packings with the given contact graph $G$.  

\begin{corollary}\label{CJ:raterless} In a sufficiently small neighborhood of a packing ${{\bf P}}$ with contact graph $G$, suppose that no contacts are lost in the space of collectively jammed packings.  Then the dimension of $X(n, CJ(G))$ is $n+1$
\end{corollary}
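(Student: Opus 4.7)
The plan is to show that, under the hypothesis, the set $X(n, CJ(G))$ coincides locally with $X(n, CJ)$ near ${\bf P}$, and then invoke Theorem \ref{theorem:dimension}.

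First, I would argue that no new contacts can form in a small enough neighborhood of ${\bf P}$. Since $G$ is the contact graph of ${\bf P}$, every pair $\{i,j\}$ which is not an edge of $G$ satisfies the strict inequality $|\p_i-\p_j| > r_i+r_j$ at ${\bf P}$. By continuity of the center-to-center distances and of the radii in the ambient packing space $\Pa$, each such strict inequality persists on an open neighborhood of ${\bf P}$. Taking the intersection over the finitely many non-edges produces an open neighborhood $U$ of ${\bf P}$ in which every packing ${\bf Q}$ has contact graph contained in $G$.

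Second, the hypothesis of the corollary states that, after possibly shrinking $U$, every collectively jammed packing in $U$ has contact graph containing $G$. Combined with the previous step, this forces the contact graph of every ${\bf Q}\in U\cap X(n, CJ)$ to be exactly $G$, so that locally near ${\bf P}$ one has the identification $X(n, CJ(G)) = X(n, CJ)$.

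Third, Theorem \ref{theorem:dimension} (equivalently Corollary \ref{cor:CJ}) gives that $X(n, CJ)$ has local dimension $n+1$ at ${\bf P}$, parametrized by the two lattice degrees of freedom together with the $n-1$ radius-ratio degrees of freedom, modulo rattlers. The local identification from the previous step then immediately yields $\dim X(n, CJ(G)) = n+1$.

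The main subtlety I anticipate is bookkeeping around rattlers, since Theorem \ref{theorem:dimension} is stated modulo rattlers; this is precisely the hidden content of the hypothesis. The assumption that no contacts are lost prevents a spine disk of ${\bf P}$ from breaking away and becoming a rattler under perturbation, so the rattler count is locally constant and contributes the same dimensional correction on both sides of the identification. Thus the $n+1$ active parameters (two lattice, $n-1$ radius-ratio) all survive into $X(n, CJ(G))$, giving the claimed dimension.
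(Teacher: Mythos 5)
Your proposal is correct and takes essentially the same route the paper intends: the paper states this corollary without a separate proof, treating it as an immediate consequence of Proposition \ref{proposition:near-rigid}, Theorem \ref{theorem:dimension} and Corollary \ref{cor:CJ}. Your argument --- no new contacts can form in a small neighborhood since non-contacts are strict inequalities, the hypothesis forbids losing contacts (which in particular rules out rattler formation), hence locally $X(n, CJ(G)) = X(n, CJ)$ and the $n+1$ parameters $(\bar{\r},\Lambda)$ give the dimension --- is exactly that reasoning made explicit.
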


Notice that the statements here do not depend on the two dimensional analytic theory that we describe in the next Section \ref{section:analytic}, and indeed there are higher dimensional statements that we will not go into detail here.  Notice, also, that the results in this section hold in any higher dimension with appropriate adjustment for the dimension of the space of lattices in Theorem \ref{theorem:dimension}.

\section{Analytic theory of circle packings} \label{section:analytic}

We need to first do some bookkeeping as far as the topology of graphs on the surface of $\T^2$.  One of the first results of Andreev and Koebe was to start with a triangulation of a surface, say a torus, and then create a circle packing whose graph is that triangulation.  This is explained in careful detail in Stephenson's book \cite{Stephenson}.  Note that this pays no attention to the radii of the packing.  So suppose that there are $n$ circles in a triangulated packing, with $e_T$ edges, and $T$ triangles.  Since each edge is adjacent to $2$ triangles and each triangle is adjacent to $3$ edges, we have 
\[
3T=2e_T,
\]
and since the Euler characteristic of $\T^2$ is $0$, we get that 
\[
n - e_T + \frac{2}{3}e_T=0,
\]
\begin{center}and $e_T= 3n$.\end{center}
\bigskip
In our case, we usually do not have a complete triangulation of the torus.  Indeed, from Section \ref{section:rigidity}, we only have $e=2n-1$ edges if the packing is isostatic.  But our packing graph is embedded in $\T^2$, and it can be completed to a triangulation by adding $e_T-e=n+1$ additional edges.
 But this does not help us since we do not want the extra contacts of a triangulation.  For example, in Figure \ref{fig:n3} we see an isostatic packing of $3$ disks in a square torus with $2\cdot 3 -1=5$ edges, where $3+1=4$ additional edges have been inserted to create a triangulation of the torus.  Notice that there are multiple edges between some pairs of vertices, but in the universal cover, as shown, the edges form an actual triangulation of the plane.
 
 \begin{figure}[htbp]
    \centering
        \includegraphics[width=0.4\textwidth]{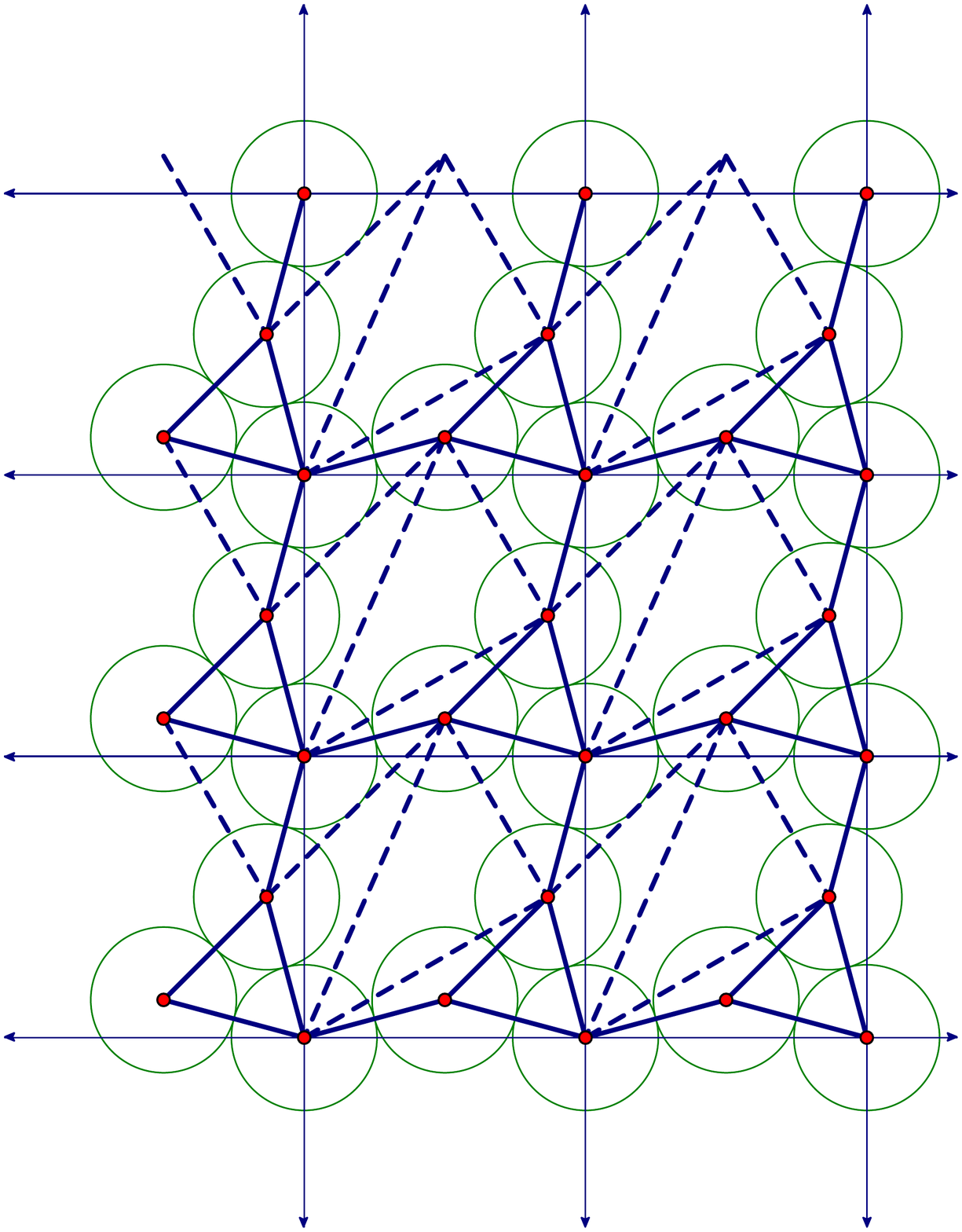}%
         \captionsetup{labelsep=colon,margin=2cm}
    \caption{This is a maximally dense packing of $3$ equal disks in a square torus \cite{Will-square}  with a minimal number of contacts,  namely $5$, and so it is isostatic. We have inserted $4$ additional dashed edges to create a triangulation with $9$ edges total.}
    \label{fig:n3}
    \end{figure}
    
There has been a lot of interest in circle arrangements,  where some pairs of circles are forced to intersect at certain specified angles, generalizing the  Koebe-Andreev-Thurston result, where they are made to be tangent.  At the other end of that construction, there is a way to measure the distance between pairs of non-tangent circles with disjoint interiors. The \emph{inversive distance} between two circles is defined as 
 \[
 \sigma(D_1,D_2)=\frac{|\p_1-\p_2|^2-(r_1^2+r_2^2)}{2r_1r_2},
 \]
where $D_1$ and $D_2$ are disks with corresponding radii $r_1$ and $r_2$.  It does not seem that there is a proof known, where any set of inversive distances determine a configuration with those inversive distances.  But the following local result by Ren Guo \cite{Guo} is enough for our purposes.

\begin{theorem}\label{theorem:Guo} Let $\mathcal T$ be a triangulation of a torus corresponding to a circle packing ${{\bf P}}$ where $\sigma(D_i,D_j) \ge0$ is the inversive distance between each pair of disks $i, j$ that are an edge in the triangulation  $\mathcal T$.   Then the inversive distance packings are locally determined by the values of $\sigma(D_i,D_j)$.
\end{theorem}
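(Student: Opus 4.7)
The plan is to follow the discrete variational framework of Colin de Verdi\`ere--Luo--Guo for rigidity of inversive-distance circle patterns. First, I would fix the triangulation $\mathcal{T}$ and the prescribed non-negative inversive distances $\sigma_{ij}$ on the edges, and parametrize candidate packings by the log-radii $u_i = \log r_i$. Once $\mathbf{u}$ is chosen, every edge length is forced by $\ell_{ij}^2 = r_i^2 + r_j^2 + 2 r_i r_j \sigma_{ij}$, so in each triangle $(i,j,k) \in \mathcal{T}$ the angles $\theta_i^{jk}(u_i, u_j, u_k)$ become smooth functions of the log-radii on the open set where the triangle inequality holds. The candidate data then assembles into a genuine flat structure on the torus exactly when the discrete curvature
\[
K_i(\mathbf{u}) \,:=\, 2\pi - \sum \theta_i^{jk}(\mathbf{u}),
\]
with the sum running over triangles of $\mathcal{T}$ incident to vertex $i$, vanishes at every vertex, together with a holonomy condition saying that the developing map closes around both generators of $\Lambda$.

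Next I would analyze the Jacobian $\partial K / \partial \mathbf{u}$. The key local calculation is to verify that, on a single triangle, the $3\times 3$ matrix $\partial(\theta_i, \theta_j, \theta_k)/\partial(u_i, u_j, u_k)$ is symmetric (a discrete Schl\"afli-type identity), has row sums zero (since $\theta_i + \theta_j + \theta_k = \pi$), has negative diagonal entries and strictly positive off-diagonal entries. Hence, as a weighted negative graph Laplacian on three vertices, it is negative semi-definite with one-dimensional kernel spanned by $(1,1,1)^T$, which corresponds to uniform rescaling of the three radii. Summing this local contribution over all $2n$ triangles of $\mathcal{T}$, the global matrix $-\partial K/\partial \mathbf{u}$ is a weighted graph Laplacian: symmetric, positive semi-definite, with one-dimensional kernel spanned by the global scaling direction $(1,\dots,1)$.

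With this in hand, the theorem follows from the inverse function theorem applied to $K$. Because $K(\mathbf{u})=0$ at the given packing and the kernel of $\partial K/\partial \mathbf{u}$ is exactly uniform scaling of all radii, the log-radii are locally determined by $(\sigma_{ij})$ up to an additive constant. That residual ambiguity is removed by the torus constraint: rescaling every radius by $\lambda$ rescales all edge lengths by $\lambda$ and the total triangulated area by $\lambda^2$, which must equal the fixed $\det \Lambda$. Once the radii are pinned down, the positions $\p_i$ are recovered by laying out the triangles in the universal cover, and consistency of the two holonomies around the torus recovers the lattice.

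The main obstacle is the triangle-level computation. Both the symmetry $\partial \theta_i/\partial u_j = \partial \theta_j/\partial u_i$ and, more critically, the strict positivity of the off-diagonal entries rest on a delicate trigonometric identity specific to the inversive-distance formula, and they use the hypothesis $\sigma_{ij}\ge 0$ in an essential way (for negative inversive distance, positivity can fail and the rigidity statement itself is known to be more subtle). This is precisely the technical heart of Guo's paper; in the tangent case $\sigma_{ij}=1$ it reduces to the classical Colin de Verdi\`ere rigidity calculation for tangent circle packings.
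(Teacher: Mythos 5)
You cannot be compared against an in-paper argument here: the paper offers no proof of Theorem~\ref{theorem:Guo}, which is quoted as a known result of Ren Guo \cite{Guo}; what you have written is essentially a sketch of Guo's variational proof (log-radii $u_i$, the curvature map $K$, symmetry of its Jacobian, negative semi-definiteness with kernel the scaling direction, then an inverse-function-theorem and holonomy argument). That outline is the right one, and your global steps (summing the per-triangle forms, identifying the kernel with global scaling via connectivity, pinning the scale by the area of the torus, recovering positions and lattice by developing in the universal cover) are fine once the per-triangle lemma is available.

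The genuine gap is in the per-triangle lemma itself, which you base on the claim that $\partial(\theta_i,\theta_j,\theta_k)/\partial(u_i,u_j,u_k)$ has strictly positive off-diagonal entries, so that $-\partial K/\partial\mathbf{u}$ is a weighted graph Laplacian. That sign pattern is a feature of the tangency case $\sigma\equiv 1$ (Colin de Verdi\`ere) but is false for inversive-distance data, even in the packing-relevant regime $\sigma_{ij}\ge 1$. Concretely, take $r_1=r_2=r_3=1$, $\sigma_{12}=7$, $\sigma_{13}=2$, $\sigma_{23}=1$, so that $\ell_{12}=4$, $\ell_{13}=\sqrt{6}$, $\ell_{23}=2$ is a valid triangle. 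From $\ell_{ij}^2=r_i^2+r_j^2+2r_ir_j\sigma_{ij}$ one gets
\[
\frac{\partial\theta_1}{\partial u_2}
=\frac{\ell_{23}}{\ell_{13}\ell_{12}\sin\theta_1}
\left(\frac{\partial\ell_{23}}{\partial u_2}-\cos\theta_2\,\frac{\partial\ell_{12}}{\partial u_2}\right),
\qquad
\frac{\partial\ell_{23}}{\partial u_2}=1,\quad
\frac{\partial\ell_{12}}{\partial u_2}=2,\quad
\cos\theta_2=\tfrac{7}{8},
\]
so the bracket equals $1-\tfrac{7}{4}<0$ and $\partial\theta_1/\partial u_2<0$. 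Hence the matrix is not of Laplacian/M-matrix type, and your route to negative semi-definiteness and to the one-dimensionality of the kernel (``zero row sums plus positive off-diagonals'') collapses; this is precisely the technical heart you deferred, and as stated it is wrong rather than merely delicate. Guo's actual lemma delivers exactly what your argument needs---symmetry of the Jacobian and negative semi-definiteness with kernel spanned by $(1,1,1)$ whenever $\sigma_{ij}\in[0,\infty)$---but it is proved by a direct computation of the relevant minors/determinant in which $\sigma_{ij}\ge 0$ enters, with no control on the signs of the off-diagonal entries; the absence of that sign structure is also one reason these packings are only locally, and not globally, rigid. With the lemma restated and proved in that form, the rest of your proposal goes through.
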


\begin{corollary}\label{cor:Guo}
The dimension of $X(n, CJ(G))$ is no greater than $3n-k$, where the number of contacts in the graph $G$ is $k$.
\end{corollary}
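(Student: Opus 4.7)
The plan is to apply Ren Guo's theorem (Theorem \ref{theorem:Guo}) by completing the contact graph $G$ to a triangulation of $\T^2$ and then counting how many inversive-distance coordinates are pinned down by the contact constraints. First I would take the collectively jammed packing ${\bf P}$ with contact graph $G$, realized as a straight-line embedding in $\T^2$ with the disk centers as vertices. Since $G$ has $k$ edges and every triangulation of $\T^2$ on $n$ vertices has exactly $3n$ edges (by the Euler-characteristic computation at the start of Section \ref{section:analytic}), I can add $3n - k$ extra edges to $G$ to obtain a triangulation $\mathcal T$ of $\T^2$, exactly as was done in Figure \ref{fig:n3}. Along each added edge $\{i,j\}$, the packing disks $D_i$ and $D_j$ have disjoint interiors, so $\sigma(D_i, D_j) \ge 0$, while along each contact edge of $G$ we have $\sigma(D_i, D_j) = 1$.

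Next I would appeal to Theorem \ref{theorem:Guo} applied to the triangulation $\mathcal T$: locally near ${\bf P}$, the map sending a packing to its $3n$-tuple of inversive distances $(\sigma_{ij})_{\{i,j\} \in \mathcal T}$ is injective. A packing in $X(n, CJ(G))$ near ${\bf P}$ has, by definition, contact graph exactly $G$, so the $k$ coordinates $\sigma_{ij}$ corresponding to edges of $G$ are pinned at the value $1$, and only the remaining $3n - k$ inversive-distance coordinates are free to vary. Hence $X(n, CJ(G))$ embeds locally into the $(3n-k)$-dimensional affine slice of $\R^{3n}$ cut out by $\sigma_{ij} = 1$ for $\{i,j\} \in G$, from which $\dim X(n, CJ(G)) \le 3n-k$ follows.

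The step requiring the most care is the persistence of the triangulation structure of $\mathcal T$ in a neighborhood of ${\bf P}$: a sufficiently small perturbation of the disk centers preserves the combinatorial straight-line triangulation, which is what lets one invoke Guo's theorem on an open neighborhood rather than just at the single packing ${\bf P}$. This is a standard stability property for straight-line embeddings, but it is the one point at which a little geometric argument is needed. Once that is in hand, the dimension bound is simply the observation that fixing $k$ of the $3n$ local coordinates cuts the dimension by at most $k$.
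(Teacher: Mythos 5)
Your argument is correct and is essentially the paper's own proof: the paper likewise completes the contact graph to a triangulation of the torus with $3n$ edges, invokes Guo's local rigidity theorem to say packings are locally determined by the inversive distances on those edges, and notes that the $k$ contact edges contribute no free parameters (tangency pins $\sigma=1$), leaving at most $3n-k$. The paper compresses this to a single sentence, so your extra details (the computation that tangent disks have inversive distance $1$, and the persistence of the geometric triangulation under small perturbations) are just elaborations of the same route, not a different argument.
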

\begin{proof}  By Theorem \ref{theorem:Guo} the dimension of all packings with $G$ as the contact graph is no greater than $3n-k$ and $X(n, CJ(G))$ is a subset of those packings. \qed

\end{proof} \qed

By varying the values of the inversive distances, it is possible to show that in the neighborhood of a collectively jammed packing, a configuration that has those particular inversive distances and that the dimension of $X(n, CJ(G))$ is exactly $3n-k$.  We expect to show this in a later work. 
One should keep in mind, though, that the metric of the ambient space, which in our case is the lattice determining the torus,  may change with the deformations of the inversive distance data, and this should be taken into account when doing our dimension calculations.

\section{The generic property} \label{section:generic}

If one has a collection of real numbers $X$, they are defined to be \emph{generic} if  there are no solutions to non-zero polynomials $p(x_1, \dots, x_n)=0$ with integer coefficients, where each $x_i \in X$.  Each such polynomial defines an algebraic set and being generic implies that the points of $X$ avoid that set.  But this is something of an overkill.  For example, in many cases, for some given set $X$ and situation at hand, there are only some finite number of such algebraic sets that have to be avoided, but it may be difficult to explicitly define what the particular polynomials are.  

Another way to think of the generic parameters $x_1, \dots, x_n$ is as independent variables satisfying no polynomial relations over the integers (or equivalently the rationals).  In our case, the independent parameters are the ratios of distinct radii of the packing disks.


\section{The isostatic theorem} \label{section:main}

\begin{theorem}\label{theorem:isostatic} If a collectively jammed packing ${{\bf P}}$ with $n$ vertices in a torus $\T^2=\R^2/\Lambda$ is chosen so that the ratio of packing disks $\bar{\r}$, and torus lattice $\Lambda$, is generic, then the number of contacts in ${{\bf P}}$ is $2n-1$, and the packing graph is isostatic.
\end{theorem}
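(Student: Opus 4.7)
The plan is to combine the upper bound $\dim X(n, CJ(G)) \le 3n-k$ from Corollary \ref{cor:Guo} with the definition of generic from Section \ref{section:generic}. Since Corollary \ref{cor:rigid-count} already gives $k \ge 2n-1$, the task reduces to ruling out $k \ge 2n$. First I would suppose for contradiction that the contact graph $G$ of ${{\bf P}}$ has $k \ge 2n$ edges, and record the discrete winding data $\mathcal{D}$ that specifies, for each contact $\{i,j\}$, which lattice translate $z_1^{ij}\lambda_1 + z_2^{ij}\lambda_2$ realizes the tangency in the universal cover.

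Next I would write down the $k$ contact equations
\[
|\p_i - \p_j + z_1^{ij}\lambda_1 + z_2^{ij}\lambda_2|^2 = (r_i + r_j)^2,
\]
which are polynomials with integer coefficients in the coordinates $(\p_i, r_i, a, b, c)$. Together with the normalizations $\p_1 = 0$, $r_1 = 1$, and $ac = 1$, these equations cut out a real algebraic variety $V_{\mathcal{D}}$ defined over $\QQ$ containing $X(n, CJ(G))$. By Corollary \ref{cor:Guo}, $\dim X(n, CJ(G)) \le 3n - k \le n$, so its Zariski closure inside $V_\mathcal{D}$ is a real algebraic subset of dimension at most $n$, still defined over $\QQ$. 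Pushing forward by the polynomial projection $\pi$ to the $(n+1)$-dimensional parameter space $(\bar{\r}, \Lambda)$ and taking Zariski closure again, one obtains a proper algebraic subvariety $W_\mathcal{D} \subsetneq \RR^{n+1}$ defined over $\QQ$, hence cut out by some non-zero polynomial $p_\mathcal{D}$ with integer coefficients. Since the parameters of ${{\bf P}}$ lie in $\pi(\overline{X(n,CJ(G))}) \subset W_\mathcal{D}$, they satisfy the non-zero integer polynomial relation $p_\mathcal{D} = 0$, contradicting the generic hypothesis. Hence $k = 2n-1$, and ${{\bf P}}$ is isostatic.

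The main obstacle will be the careful real-algebraic bookkeeping behind that contradiction. Specifically, I need (i) that the Zariski closure over $\QQ$ of a semi-algebraic set defined over $\QQ$ has the same real dimension and remains defined over $\QQ$, (ii) that polynomial projection does not increase real dimension and that the Zariski closure of its image is also defined over $\QQ$, and (iii) that although the winding data $\mathcal{D}$ a priori ranges over a countable set of integer tuples, yielding a countable family of polynomials $\{p_\mathcal{D}\}$, the generic parameters must avoid all of them simultaneously, which is exactly the content of Section \ref{section:generic}'s definition. Some care is also required around the determinant-one normalization $ac=1$, which is itself a polynomial identity: genericity should be read modulo this convention, equivalently on the $n+1$ truly independent parameters of $(\bar{\r}, \Lambda)$.
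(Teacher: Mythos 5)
Your argument is correct in substance, and it reaches the conclusion by a leaner route than the paper. The paper's proof first uses genericity to argue that the number of contacts is locally constant near ${\bf P}$, then invokes the perturbation machinery of Section 5 (Proposition \ref{prop:continuity}, Theorem \ref{theorem:dimension}, Corollaries \ref{cor:CJ} and \ref{CJ:raterless}) to obtain the lower bound $\dim X(n,CJ(G)) = \dim X(n,CJ) = n+1$, and only then compares this with Guo's upper bound $3n-k$ to get $k\le 2n-1$. You never need the lower bound or the existence of nearby jammed packings: assuming $k\ge 2n$, Corollary \ref{cor:Guo} alone makes the relevant packing set at most $n$-dimensional, so its projection to the $(n+1)$-dimensional parameter space $(\bar{\r},\Lambda)$ is a semi-algebraic set defined over $\QQ$ of dimension at most $n$, whose Zariski closure is a proper $\QQ$-subvariety, and generic parameters cannot lie on it. This trades the paper's dimension-equality argument (which is somewhat informal at the step ``any restriction on the number of contacts corresponds to a constraint unless it is open'') for standard real-algebraic facts (Tarski--Seidenberg, equality of semi-algebraic and Zariski-closure dimension over $\QQ$), at the cost of the winding-data bookkeeping and the countable family of combinatorial types, which the definition of generic absorbs exactly as you say. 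Both proofs share the same skeleton: Corollary \ref{cor:rigid-count} for $k\ge 2n-1$ and for isostaticity once $k=2n-1$, Corollary \ref{cor:Guo} for the upper bound, and the count that $(\bar{\r},\Lambda)$ has $n+1$ independent parameters.

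Two small repairs. First, you impose both $r_1=1$ and $ac=1$; these are two normalizations of one scaling action, and a given jammed packing will generically satisfy only one of them, so keep exactly one (say $\det\Lambda=1$ with $r_1$ free, which matches the ambient count $3n$ behind Corollary \ref{cor:Guo}). Second, the Zariski-closure step requires the set being closed up to be semi-algebraic over $\QQ$; collective jamming is such a condition (via Theorem \ref{theorem:Danzer} and quantifier elimination), but it is cleaner to apply Guo's bound to the set of \emph{all} packings with contact graph $G$ and given winding data---cut out explicitly by integer polynomial equalities and inequalities, and exactly the set used in the paper's proof of Corollary \ref{cor:Guo}---and project that set instead, which sidesteps the issue entirely.
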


\begin{proof} Restrict to a sufficiently small neighborhood of a collectively jammed packing ${{\bf P}}$.  Any restriction on the number of contacts in the space $X(n,CJ)$ of collectively jammed packings constitutes an algebraic or semi-algebraic subset, and corresponds to a constraint in the $\bar{\r}$ and lattice variables unless it defines (an open subset of) the whole space $X(n,CJ)$.  This is because there is a natural projection from collectively jammed  packings $(\p, \r, \Lambda)$ to $(\bar{\r}, \Lambda)$, where $\Lambda$ has determinant $1$, the dimension of  $X(n,CJ)$ in $n+1$, and the dimension of the $(\bar{\r}, \Lambda)$ is also $n+1$.  Therefore for generic $(\bar{\r}, \Lambda)$, where $\Lambda$ has determinant $1$, we may assume the number of contacts in the neighborhood of  ${{\bf P}}$ is constant.   By Corollaries \ref{cor:Guo},  \ref{CJ:raterless}, \ref{cor:CJ}, for packings with $n$ vertices and $k$ contacts that locally have a constant number of contacts among collectively jammed packings and thus have constant graph $G$, the following holds: 

\[3n-k \ge \dim(X(n, CJ(G)))  = \dim(X(n, CJ))= n+1.
\]
Thus $3n -(n+1)=2n-1 \ge k$.  By Corollary \ref{cor:rigid-count}  such packings are isostatic.  If the radii ratios $\bar{\r}$ and the lattice variables are generic, and the packings in the neighborhood of ${{\bf P}}$ are collectively jammed with a constant number of contacts, then those packings are isostatic, as was to be shown.  \qed
\end{proof}

One could do a similar calculation for the case when each packing is assumed to be strictly jammed, allowing the lattice to vary.  Then the number of contacts in this case is $2n+1$ instead of $2n-1$.

It is also possible to extend Theorem \ref{theorem:isostatic} to cases when the number of free variables used to define a generic $\bar{\r}$ and lattice are less than $n+1$, where $n$ is the number of disks.  For example, suppose the number of free lattice variables is one instead of two. We can show, if the $\bar{\r}$ and lattice variables are otherwise generic,  then the number of contacts is at most $2n$ instead of $2n-1$.  An example of this is shown in Figure  \ref{fig:2-disks-square}, where $n=2$, the number of free lattice variables is one instead of $2$ and there is just one radius ratio.  The number of contacts in a generic case is $4$ instead of 3.  This is the only example we know so far though.

\section{Computations} \label{section:main}

Will Dickinson, et al. in \cite{Will-square} showed that the most dense packing of $5$ equal circles in a square torus is when they form a grid as in Figure \ref{fig:5-square}.  This packing is not isostatic since it clearly does not have just a one-dimensional equilibrium stress, but two.   It has $2\cdot 5=10$ contacts, one more than needed for rigidity.  When the disk radii are perturbed, we get the packing in Figure \ref{fig:5-disks-2}, which is isostatic with $9$ contacts even though the defining lattice is still the square lattice.
\begin{figure}[!htb]
    \centering
    \begin{minipage}{.5\textwidth}
        \centering
        \includegraphics[width=0.8\linewidth]{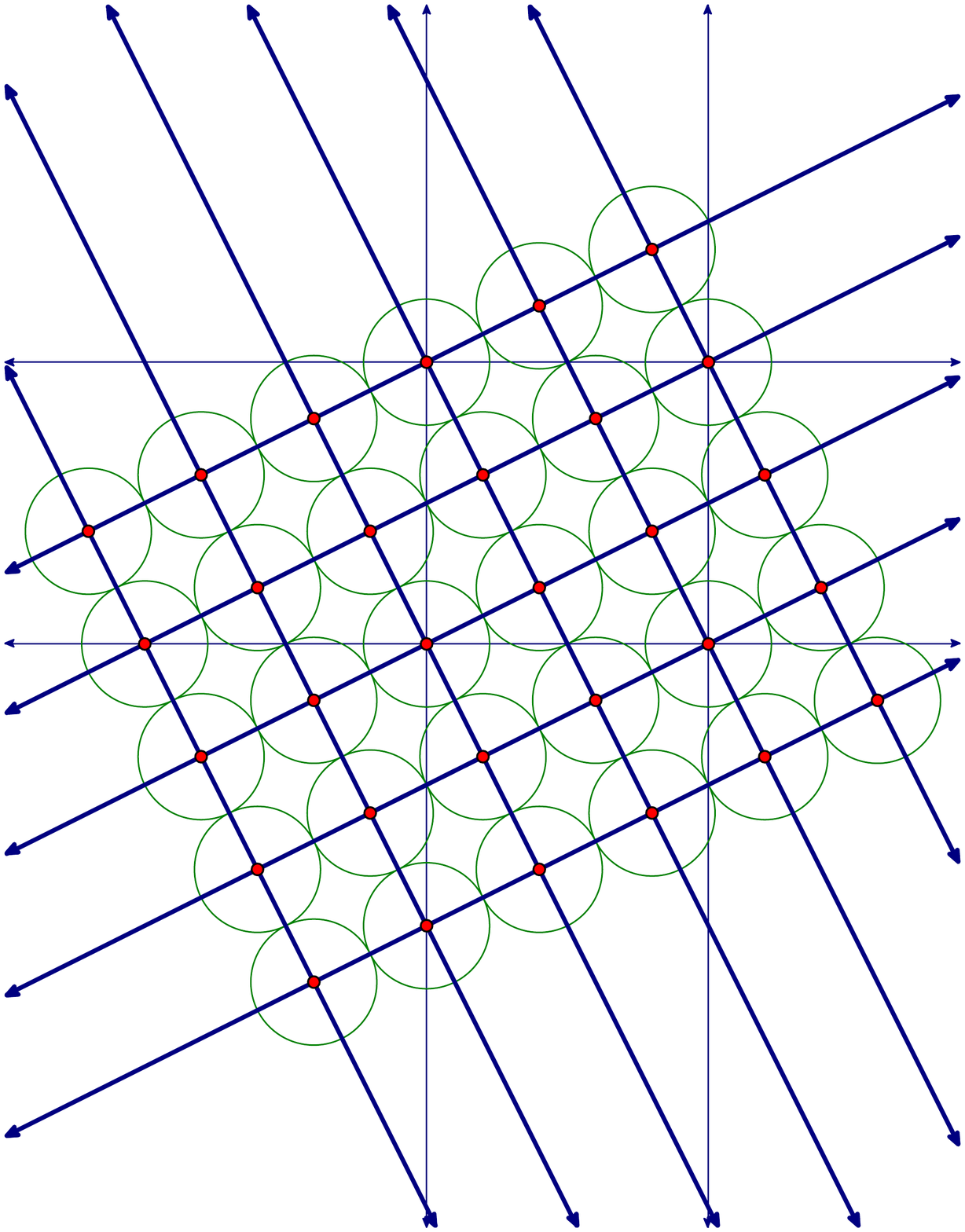}
        \caption{The most dense packing of $5$ disks\\ in the square torus.}
        \label{fig:5-square}
    \end{minipage}%
    \begin{minipage}{0.475\textwidth}
        \centering
        \includegraphics[width=0.8\linewidth]{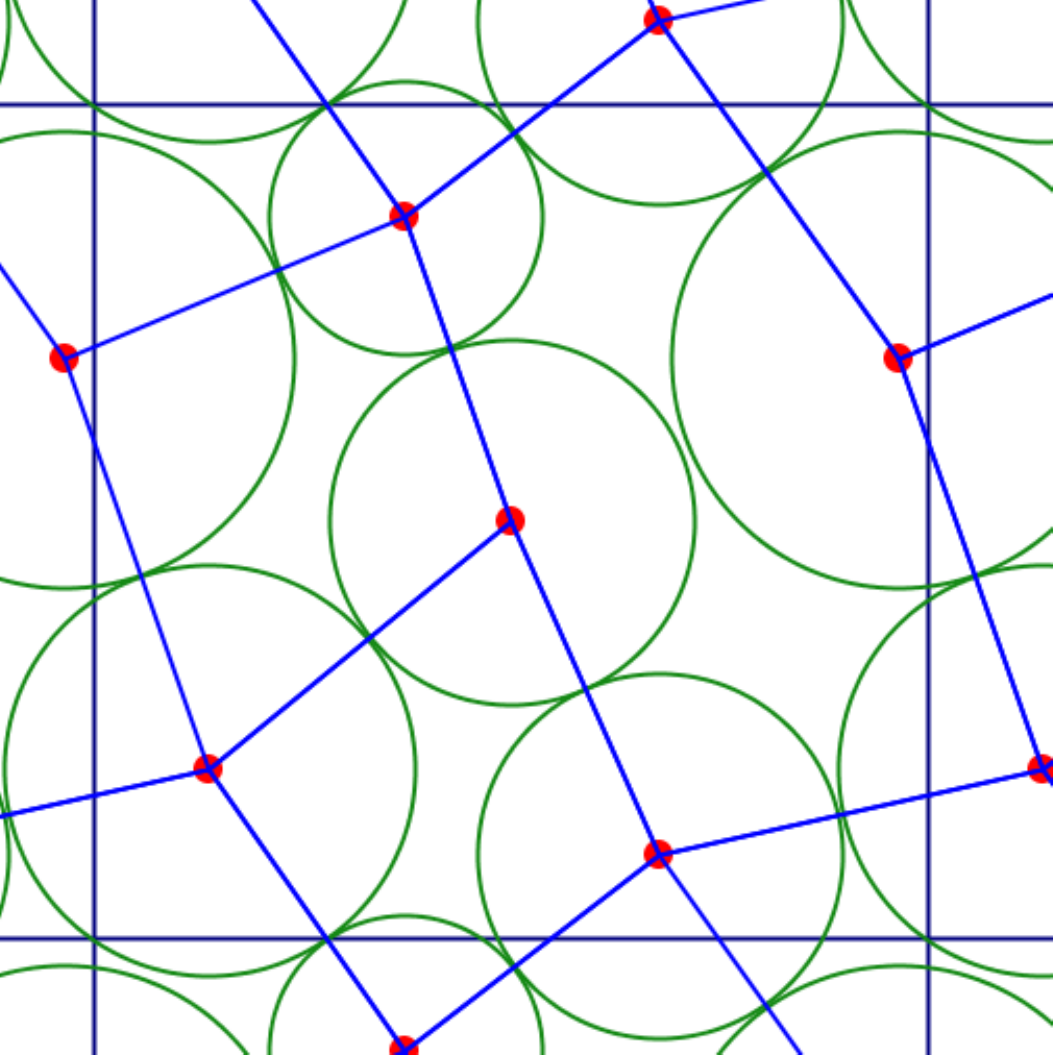}
        \caption{An isostatic packing of  $5$  disks with generic radii in the square torus.}
        \label{fig:5-disks-2}
         \end{minipage}\\%
    \end{figure}
\begin{figure}[!htb]
    \centering
    \begin{minipage}{.555\textwidth}
        \centering
        \includegraphics[width=0.8\linewidth]{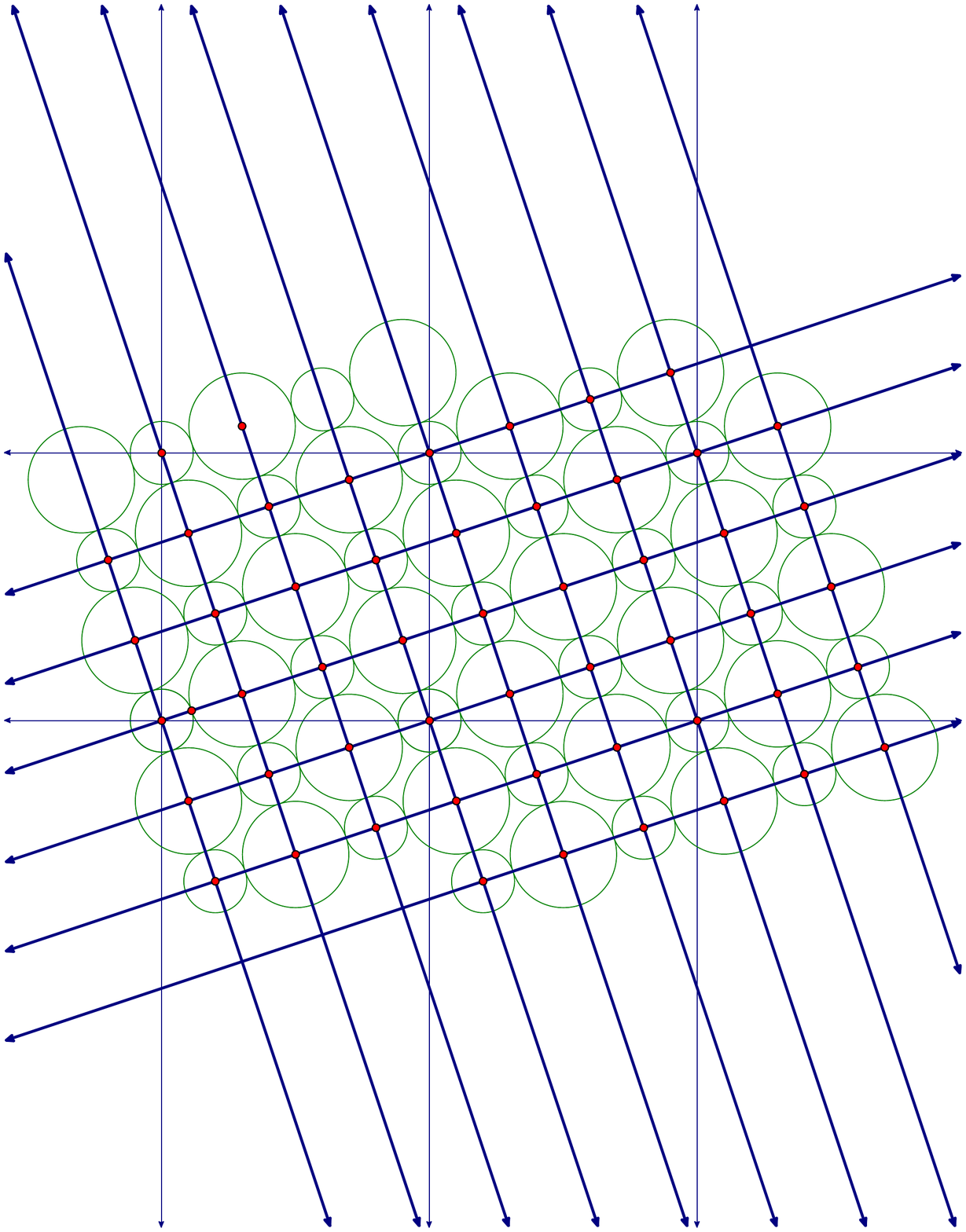}
        \caption{A packing of $10$ disks in the square\\ torus.}
        \label{fig:10-unequal-disks}
    \end{minipage}%
    \begin{minipage}{0.475\textwidth}
        \centering
        \includegraphics[width=0.8\linewidth]{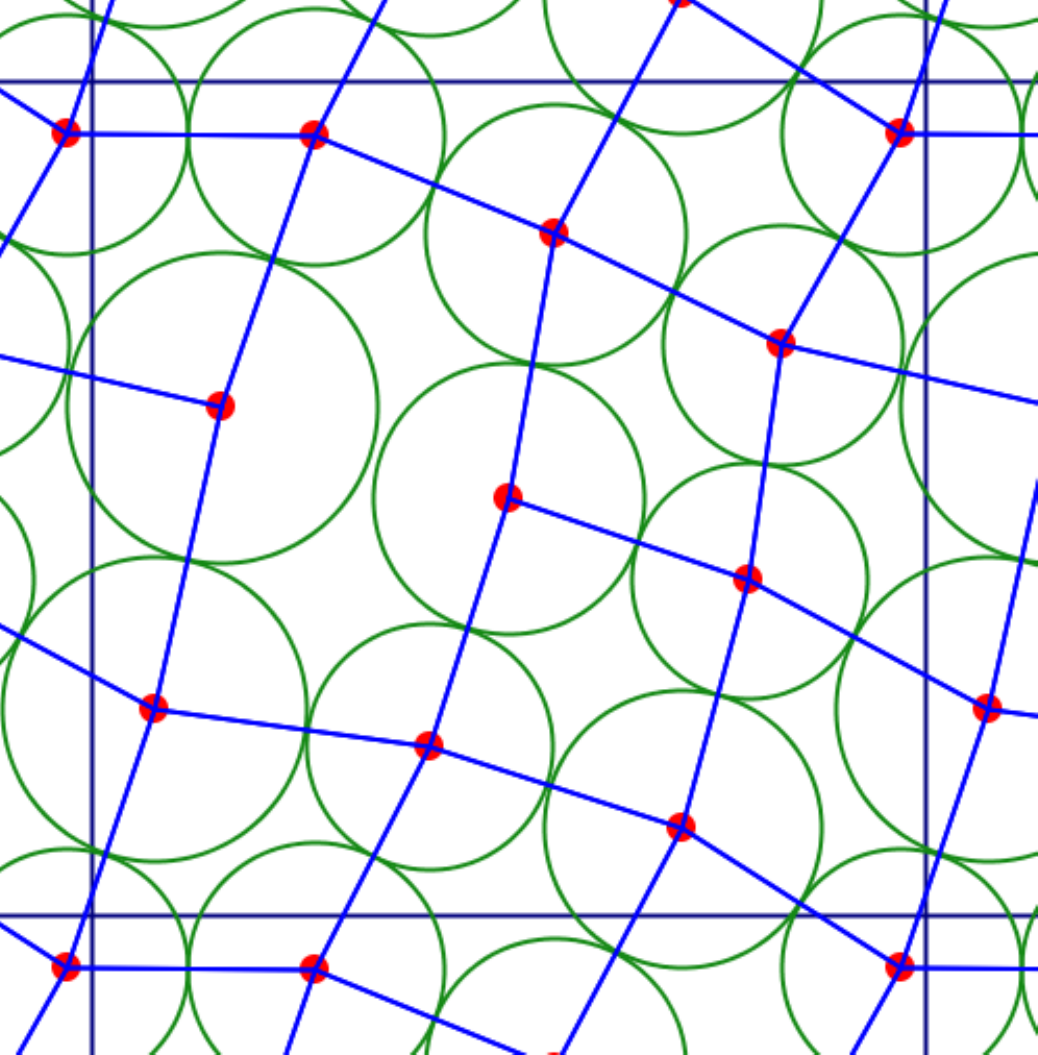}
        \caption{An isostatic packing of  $10$  disks\\ with generic radii in the square torus.}
        \label{fig:10-disks}
         \end{minipage}\\%
    \end{figure}
    
 Similarly, for $10$ disks in a square torus, even when there are two different radii, a rigid jammed packing may not be isostatic as in Figure \ref{fig:10-unequal-disks}.  When the radii are sufficiently varied, Figure \ref{fig:10-disks} shows how one of the contacts breaks, obtaining another isostatic packing.  Note that in these packings, we still get an isostatic packing without having to perturb the the underlying square lattice.   Indeed, it is tempting to propose that if the packing has a sufficiently large number of packing elements, then it will be isostatic or at least have the minimum number of contacts for the collectively jammed case (or the strictly jammed case) even if all the disks have the same radius.  This does not seem to be the case with some of our calculations, and for calculations done by Atkinson et al in \cite{Torquato-mono}.   
 
The packings of Figure \ref{fig:5-disks-2} and Figure \ref{fig:10-disks} were obtained with a ``Monte Carlo" algorithm similar to the one described in \cite{Donev-Torquato-Connelly} and \cite{Torquato-tricusp}, where a seed packing with generic radii with no contacts allows the radii to grow until the packing jams.  This generally works when there is enough random motion to force the packing to be rigid.  If the packing seems not to be converging to an infinitesimally rigid configuration, it is always possible to apply a linear programming algorithm to break up any configuration that is not converging sufficiently rapidly, as was described in \cite{Donev-Torquato-Connelly}.

\section{The tricusp case} \label{section:triceps}

There are many circumstances where there is a jammed packing in a bounded container with an appropriate condition on the boundary of the container, and the infinitesimal rigidity condition holds.  (The condition is that the boundary of the container must consist of concave up curves like the tricusp in Figure \ref{fig:tricusp-3}.)  It seems reasonable that if the shape of the container is generic, including the ratio of the radii, that the packing is isostatic.  If the container consists of three mutually tangent circles, then the isostatic conjecture will hold fixing the boundary, since the packing is determined up to linear fractional conformal transformations, and the three boundary circles can be fixed.   We call the region between the three mutually tangent circles a \emph{tricusp} following \cite{Torquato-tricusp}, as in  Figure \ref{fig:2-rattlers} and Figure \ref{fig:tricusp-3}.  If we have a jammed packing of $n$ disks in the tricusp, we regard the boundary as fixed and since there are no trivial motions,  there are $2n$ degrees of freedom for the centers of the disks. If the packing is isostatic, there is one other constraint due to the stress condition as before.  Thus there are exactly $e=2n+1$ contacts, or equivalently edges in the packing graph, when the packing is isostatic.

\begin{figure}[!htb]
    \centering
    \begin{minipage}{.5\textwidth}
        \centering
        \includegraphics[width=0.8\linewidth]{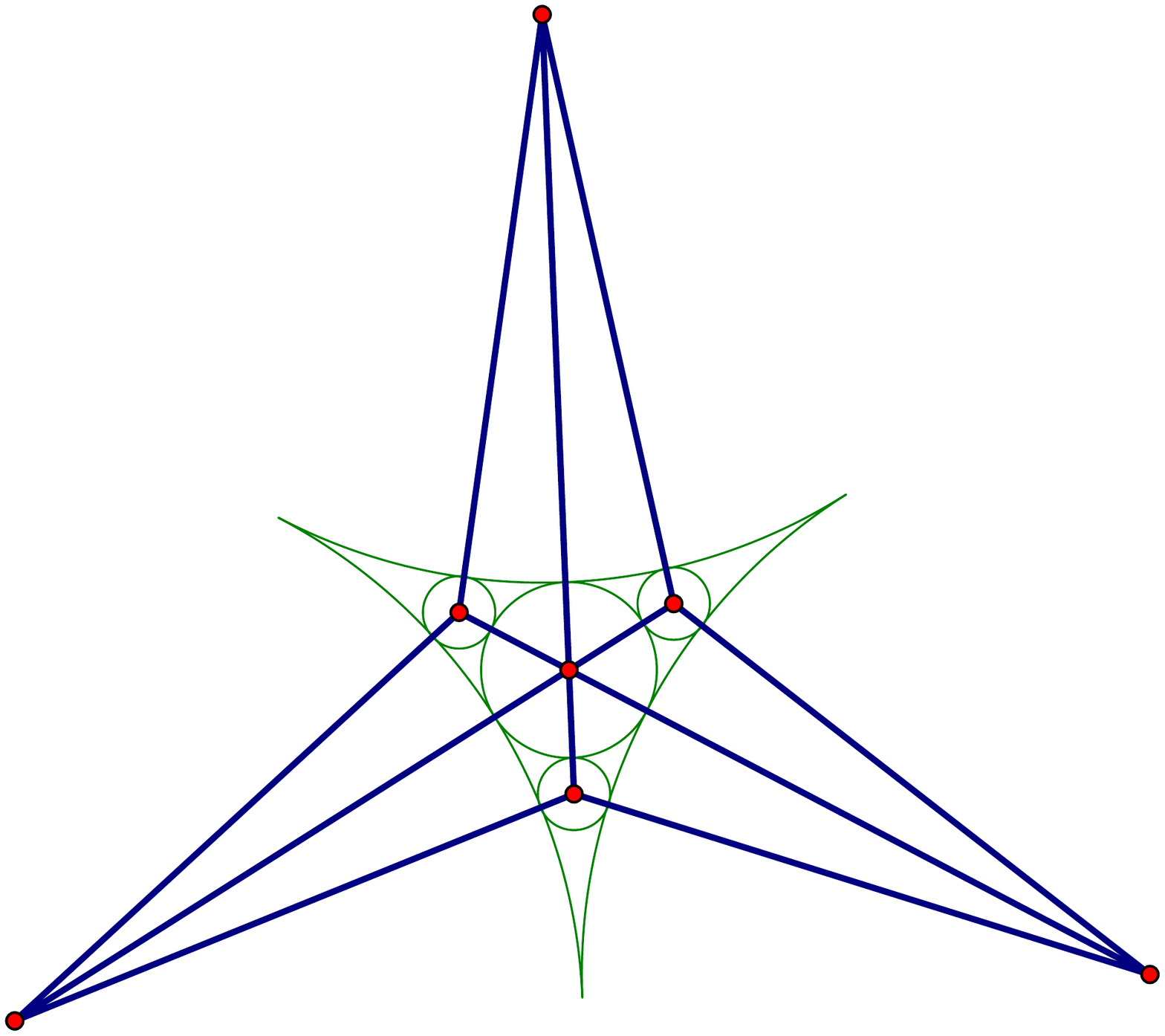}
        \caption{The tricusp with $4$ jammed disks, \\not isostatic.}
        \label{fig:tricusp-3}
    \end{minipage}%
    \begin{minipage}{0.5\textwidth}
        \centering
        \includegraphics[width=0.8\linewidth]{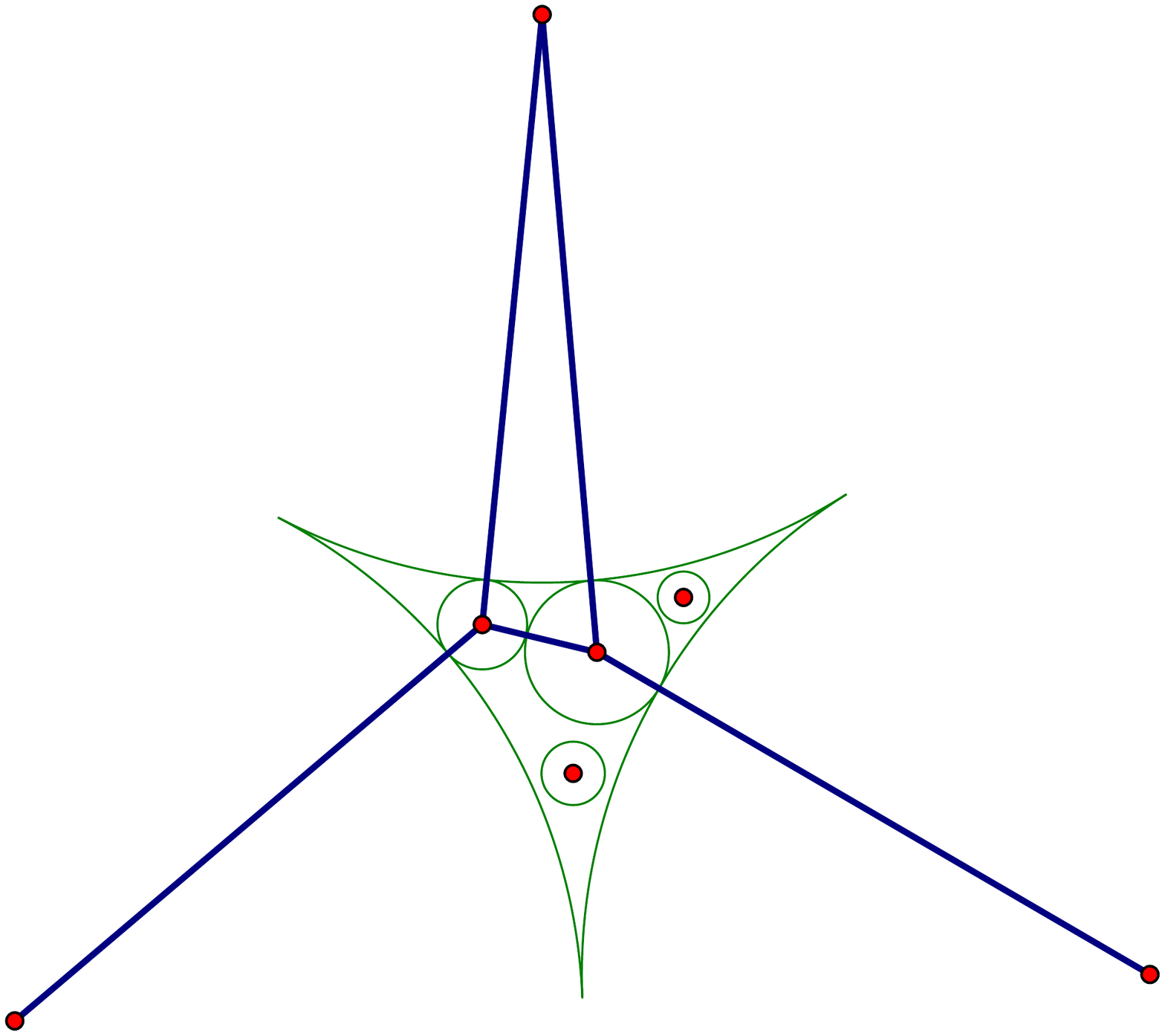}
        \caption{The tricusp with $2$ rattlers, isostatic.}
        \label{fig:2-rattlers}
         \end{minipage}\\%
    \end{figure}

On the other hand, if $e_T$ is the number of edges in a triangulation of a triangle, then $e_T= 3(n+3)-6=3n+3$.  This is because adding $3$ edges connecting the fixed edges, we get a triangulation of sphere.  Such a triangulation is well-known to have $3m-6$ edges when there are $m$ vertices, assuming that the graph is $3$-connected.   So there are $e_T-(e+3)= 3n+3-(2n+1+3)=n-1$ diagonal edges which can serve as free parameters for the inversive distance as we did for the case of the torus.  So we get the following.
\begin{theorem}If a jammed packing ${{\bf P}}_0$ with $n$ disks in a tricusp is chosen so that the ratio of the radii of the  packing disks is generic, then the number of contacts in ${{\bf P}}_0$ is $2n+1$, and the packing graph is isostatic. 
\end{theorem}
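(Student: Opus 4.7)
The plan is to run essentially the same argument as the proof of Theorem 8.1, with two modifications: the two free lattice parameters of the torus are gone (the tricusp boundary is fixed), and the rigidity count changes because there are now $2n$ interior configuration degrees of freedom and no trivial translations. Three ingredients are needed: a local dimension computation for $X(n,CJ)$ in the tricusp, an upper bound on $\dim X(n,CJ(G))$ via Guo's theorem applied to the sphere triangulation identified in the paragraph just above the theorem, and a matching lower bound on the contact count $k$ coming from infinitesimal rigidity.

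First I would establish the tricusp analogs of Proposition 5.1, Corollary 5.3, and Theorem 5.1. With the three boundary circles held rigid there are no trivial motions to quotient out, so the continuity argument is actually simpler than in the torus case: every nearby value of $\bar{\r}$ lifts (modulo rattlers) to a locally maximally dense packing near ${{\bf P}}_0$, and the projection from such packings to $\bar{\r}$ is a local homeomorphism onto an $(n-1)$-dimensional space. This yields $\dim X(n,CJ) = n - 1$ in a neighborhood of ${{\bf P}}_0$.

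Next I would apply the tricusp version of Corollary 6.1. Completing the contact graph, together with the three outer–outer tangencies, to a triangulation of the sphere on $n+3$ vertices gives $3(n+3) - 6 = 3n+3$ edges; of these, $k+3$ carry the tangency inversive distance $\sigma = 1$ and the remaining $3n-k$ are non-tangent diagonals whose inversive distances are free parameters. Guo's theorem then forces
\[
\dim X(n, CJ(G)) \le 3n - k.
\]
As in the proof of Theorem 8.1, restriction to a generic $\bar{\r}$ makes the contact graph locally constant, so $\dim X(n,CJ(G)) = \dim X(n,CJ) = n - 1$, yielding $k \le 2n+1$.

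For the matching lower bound, since the boundary is fixed the interior configuration has $2n$ degrees of freedom and no nontrivial trivial flex, so by Theorem 3.1 infinitesimal rigidity requires $k \ge 2n$ independent constraints, and the existence of a strict proper (strut) equilibrium stress needed by Roth–Whiteley forces one further contact, giving $k \ge 2n+1$. Combining both bounds, $k = 2n+1$ and the packing is isostatic. The main obstacle I expect is the first step: one must carefully re-derive the continuity/dimension argument of Section 5 in a setting where the container is a fixed tricusp rather than a varying torus, and in particular verify that the sphere reduction used here is a genuine local instance of Guo's theorem, since his original statement was phrased for a triangulated torus rather than a planar region with a pinned three-circle boundary.
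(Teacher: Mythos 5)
Your argument follows the paper's own reasoning essentially verbatim: the paper likewise obtains $k \le 2n+1$ by matching the $n-1$ free radius ratios against the $3n-k$ diagonal inversive-distance parameters of the sphere triangulation on $n+3$ vertices (with $3(n+3)-6 = 3n+3$ edges), and obtains $k \ge 2n+1$ from the $2n$ configurational degrees of freedom (no trivial motions, boundary fixed) plus one additional contact forced by the equilibrium stress. The concern you flag about applying Guo's theorem off the torus is addressed in the paper by observing that such packings are determined up to linear fractional (M\"obius) transformations, and fixing the three mutually tangent boundary circles removes exactly that ambiguity.
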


The packing in Figure \ref{fig:tricusp-3} is jammed, but when the ratios of the generic radii are perturbed so that the smaller $3$ disks are smaller than they are in Figure \ref{fig:tricusp-3}, while the larger disk, in ratio to the smaller disks, is larger than in Figure \ref{fig:tricusp-3}, we get packing with two rattlers as in Figure \ref{fig:2-rattlers}.  In this process, there can be no extra edges created in the packing graph.  However, Figure \ref{fig:tricusp-3} has $12$ edges with $4$ vertices in the tricusp, which is $3$ more edges than is needed for being isostatic.  Due to the way the radii are perturbed, two of the smaller disks must become rattlers, so $n$, the number of disks, is decreased by $2$, and the number of contacts is decreased by $7$, bringing the edge count to the isostatic case.  

\section{Varying the radii and lattice} \label{section:varying}

Instead of fixing the lattice that defines the torus, one can allow the lattice to vary as well as the individual packing disks.  In this process, if the radii of the disks (or the ratio of the radii) is fixed there is a result analogous to Theorem \ref{theorem:Danzer}.  If there is an infinitesimal motion of the lattice and disk centers, that satisfies the packing requirement (i.e.  the strut requirements on the edge lengths) and that satisfies the constraints on the lattice, (i.e. there is no increase in the area of the whole torus), then there is an actual motion that increases the overall density.  This is the following result from \cite{basics}.

\begin{theorem}If $\Lambda'$ and $\p'$ represent an infinitesimal motion of a lattice $\Lambda$ and its configuration $\p$ that determines a non-positive area change, then there is a smooth motion of the lattice with its configuration that strictly increases adjacent distances and decreases the volume unless $\p_i' = \p'_j$ for adjacent disks $i$ and $j$, and $\Lambda'$ is trivial.
\end{theorem}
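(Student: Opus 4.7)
The plan is to promote the first-order data $(\Lambda',\p')$ to a smooth one-parameter family $t\mapsto(\Lambda(t),\p(t))$ whose higher-order behavior makes every strut-gap strictly positive and the torus area strictly decreasing for small $t>0$. The strategy mirrors the Danzer-style Taylor expansion behind Theorem~\ref{theorem:Danzer}, now with the moving lattice contributing both to edge lengths and to the area. Write $L(t)$ for the $2\times 2$ matrix with columns $\lambda_1(t),\lambda_2(t)$. For each contact $\{i,j\}$ fix integers $(z^1_{ij},z^2_{ij})$ so that in the universal cover the edge vector reads $\Delta_{ij}(t) = (\p_i(t)-\p_j(t)) + z^1_{ij}\lambda_1(t) + z^2_{ij}\lambda_2(t)$, and set $f_{ij}(t) = |\Delta_{ij}(t)|^2-(r_i+r_j)^2$ and $g(t) = \det L(t)-\det L(0)$. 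The hypothesis translates to $f'_{ij}(0)\ge 0$ and $g'(0)\le 0$; the goal is $f_{ij}(t)>0$ and $g(t)<0$ on a common interval $(0,t_0)$.

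Begin with the straight-line ansatz $L(t)=L+tL'$, $\p(t)=\p+t\p'$. Since $\Delta_{ij}(t)$ is then affine in $t$, one has exactly $f_{ij}(t) = 2t\,\Delta_{ij}\cdot\Delta'_{ij} + t^2|\Delta'_{ij}|^2$, and since $\det$ is quadratic in the entries of a $2\times 2$ matrix, $g(t) = t\,\mathrm{tr}(\mathrm{adj}(L)\,L') + t^2\det L'$. On contacts with $f'_{ij}(0)>0$ the strict linear term gives $f_{ij}(t)>0$ at once, and likewise for the area if $g'(0)<0$. On tight contacts with $f'_{ij}(0)=0$ the expression collapses to $t^2|\Delta'_{ij}|^2$, which is strictly positive precisely when $\Delta'_{ij}\ne 0$. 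For the area with $g'(0)=0$, the residual $t^2\det L'$ is strictly negative precisely when $L'$ is not an infinitesimal rotation: the trace-zero matrix $A:=L^{-1}L'$ satisfies $\det A < 0$ iff its eigenvalues are real rather than purely imaginary. Hence the straight-line path already suffices whenever the velocities $\Delta'_{ij}$ are nonzero on every tight edge and $\Lambda'$, if area-preserving to first order, is non-rotational.

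The remaining cases require second-order corrections: set $\p(t)=\p+t\p'+\tfrac{t^2}{2}\p''$ and $L(t)=L+tL'+\tfrac{t^2}{2}L''$ with $L''$ having columns $\lambda''_1,\lambda''_2$. The quadratic coefficient of each tight $f_{ij}$ now picks up the linear functional $\Delta_{ij}\cdot\bigl[(\p''_i-\p''_j)+z^1_{ij}\lambda''_1+z^2_{ij}\lambda''_2\bigr]$ of $(\p'',L'')$, while that of $g$ picks up $\mathrm{tr}(\mathrm{adj}(L)\,L'')$. Making every degenerate tight-edge coefficient strictly positive and the residual area coefficient strictly negative is a linear feasibility problem on $(\p'',L'')$. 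The non-triviality hypothesis — some adjacent pair has $\p'_i\ne \p'_j$, or $\Lambda'$ is not an infinitesimal rotation — is what guarantees feasibility by a Farkas-type alternative: the only obstruction would be a nonzero self-equilibrated combination of tight constraints, which would in turn force the original flex to be trivial. The main obstacle is precisely this feasibility step: one must simultaneously enforce strict positivity on every degenerate tight edge and strict negativity on the area while keeping the corrections small enough not to reverse any already-strict first-order inequality, and one must verify that the non-triviality hypothesis really forbids the Farkas certificate that would obstruct the construction on the tight subgraph.
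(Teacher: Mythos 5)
You are trying to prove a stronger statement than the one the paper actually means (and than the one it relies on): note first that the paper gives no proof here at all --- it quotes the result from \cite{basics} --- and in that source the strictness is meant distributively: the motion strictly increases the distance between adjacent $i,j$ \emph{unless} $\p'_i=\p'_j$ for that particular pair (with the lattice contribution included), and strictly decreases the area \emph{unless} $\Lambda'$ is trivial. Your reading, in which every contact distance strictly increases and the area strictly decreases as soon as the flex is nontrivial somewhere, is false, so the second half of your argument cannot be repaired. Concretely: take the one-disk triangular lattice packing of a torus (strictly jammed, density $\pi/\sqrt{12}$) and add a tiny disk in an interstice touching exactly one big disk. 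The flex that moves only the tiny disk straight away from its contact, with $\p'=0$ on the big disk and $\Lambda'=0$, satisfies all the hypotheses and is nontrivial in your sense; but no motion can strictly increase the three self-contact distances of the big disk while strictly decreasing the torus area, since that would yield a unit-disk packing denser than $\pi/\sqrt{12}$.

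This is exactly where your Farkas/Gordan step breaks down. The dual obstruction to your feasibility problem is a nonnegative equilibrium stress supported on the degenerate tight edges whose lattice moment is proportional to the area gradient, and such a stress can perfectly well coexist with a flex that is nonzero elsewhere: in the example above the degenerate subgraph is the whole triangular contact graph, and it carries precisely that stress (this is what strict jamming gives, via Roth--Whiteley type duality), while the flex of the tiny disk remains nontrivial. So the claim ``the only obstruction would be a self-equilibrated combination of tight constraints, which would force the original flex to be trivial'' is not only unproved, it is false, and this was flagged by you as the main obstacle. What does survive is your straight-line computation: $f_{ij}(t)=2t\,\Delta_{ij}\cdot\Delta'_{ij}+t^2|\Delta'_{ij}|^2$ is the standard Danzer-type argument and already proves the correct edge-wise distance statement (nondecreasing always, strictly increasing exactly when $\Delta'_{ij}\neq 0$). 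For the area, however, your dichotomy is also too coarse: with $g'(0)=0$ the residual $t^2\det L'$ can vanish with $\Lambda'$ nontrivial (e.g.\ $L'$ a shear, $L^{-1}L'$ nilpotent and nonzero; also note that triviality of $\Lambda'$ should mean $L'L^{-1}$ antisymmetric, not a condition on $L^{-1}L'$ alone), and handling that case --- getting a strict area decrease while keeping all strut gaps nonnegative and the appropriate ones strictly positive --- is precisely the extra work done in \cite{basics}, not a routine linear feasibility step.
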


The effect of this is to increase the density of the configuration while varying both the lattice and the configuration, but keeping the radii  (ratios) constant.  When the packing is locally maximally dense with these kinds of deformations, in \cite{Donev-Torquato-Connelly} the packing is called \emph{strictly jammed}.  Notice, also, that the minimum number of contacts for a strictly jammed packing is at least two more than the minimum when the configuration has its lattice fixed.  Namely for $n$ disks, there should be at least $n+1$ contacts.   As an example of the this kind of deformation, if we start with square lattice configuration of two equal disks, perform this deformation and end up with the most dense configuration of equal disks in the plane, where each disk is surrounded by six others as in Figure \ref{fig:2-square-torus} deforming to Figure \ref{fig:triangular}.  One can see how the fundamental region has changed shape from a square with $4$ contacts and density $\pi/4=0.78539\dots$ to a rectangle with $6$ contacts with density $\pi/\sqrt{12}=0.90689\dots$. 
\begin{figure}[!htb]
    \centering
    \begin{minipage}{.5\textwidth}
        \centering
        \includegraphics[width=0.8\linewidth]{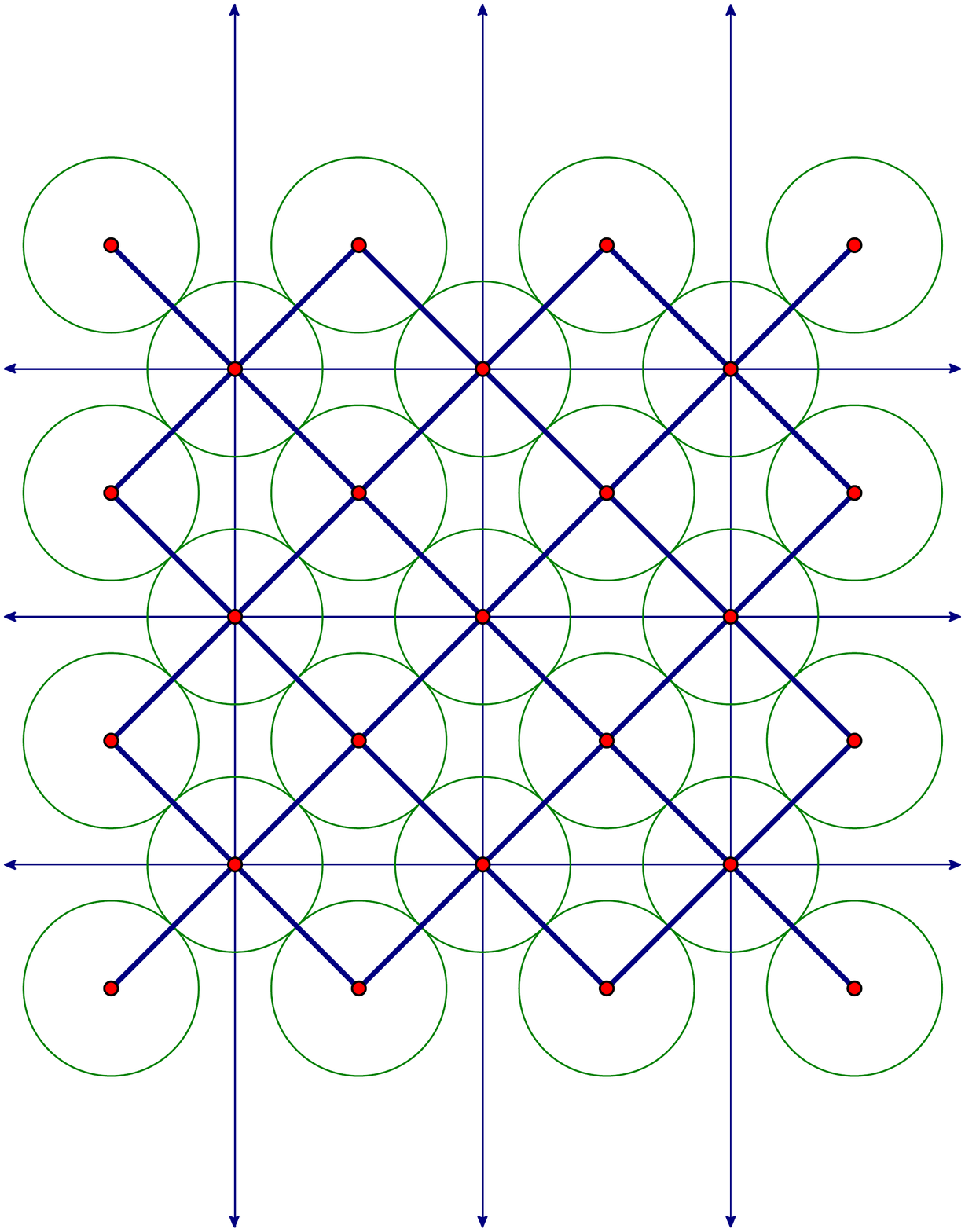}
        \caption{The most dense packing of two\\ disks in the square torus.}
        \label{fig:2-square-torus}
    \end{minipage}%
    \begin{minipage}{0.31\textwidth}
        \centering
        \includegraphics[width=0.8\linewidth]{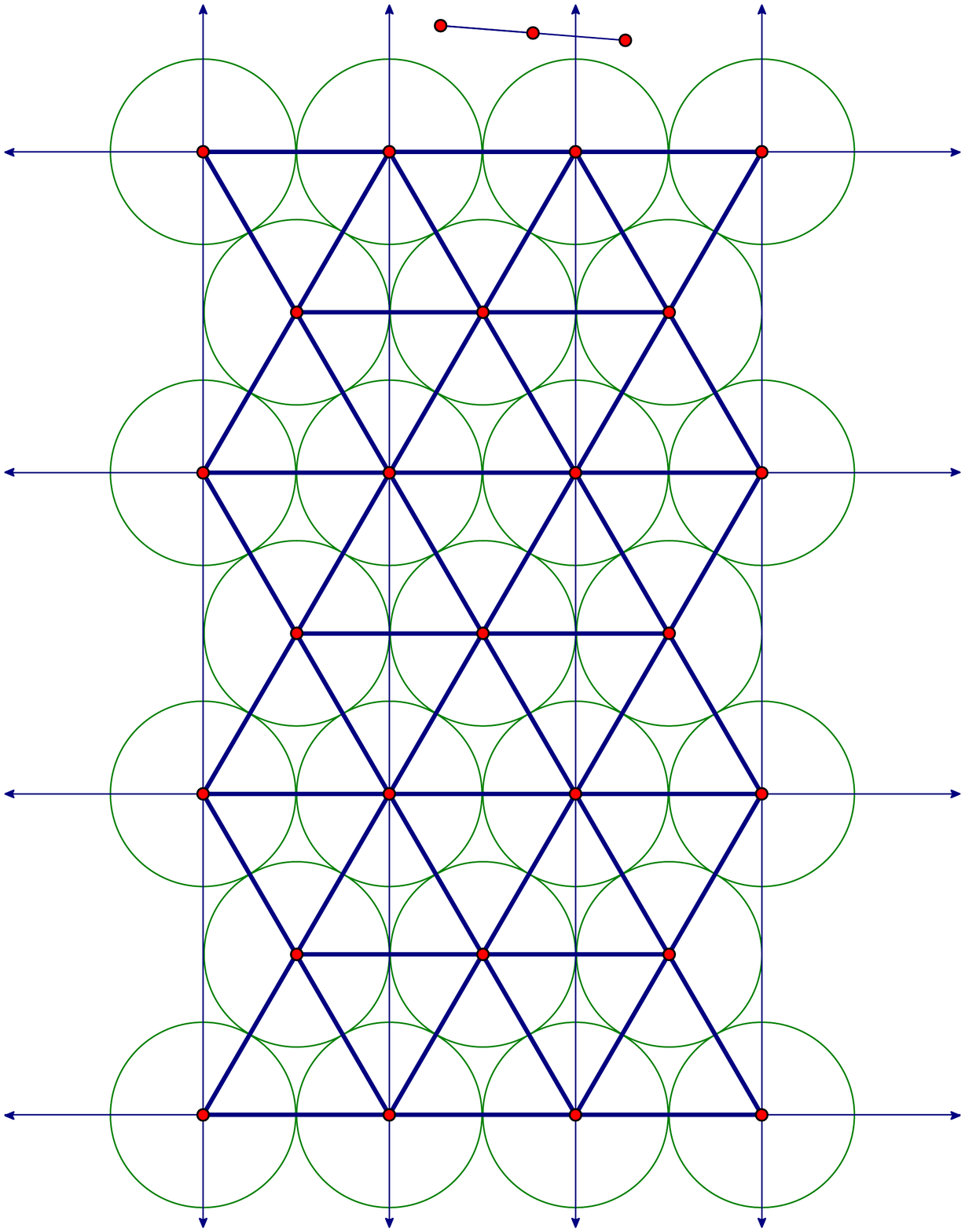}
        \caption{The most dense packing of equal disks in the plane.}
        \label{fig:triangular}
         \end{minipage}\\%
    \end{figure}
\begin{figure}[!htb]
    \centering
    \begin{minipage}{.41\textwidth}
        \centering
        \includegraphics[width=0.8\linewidth]{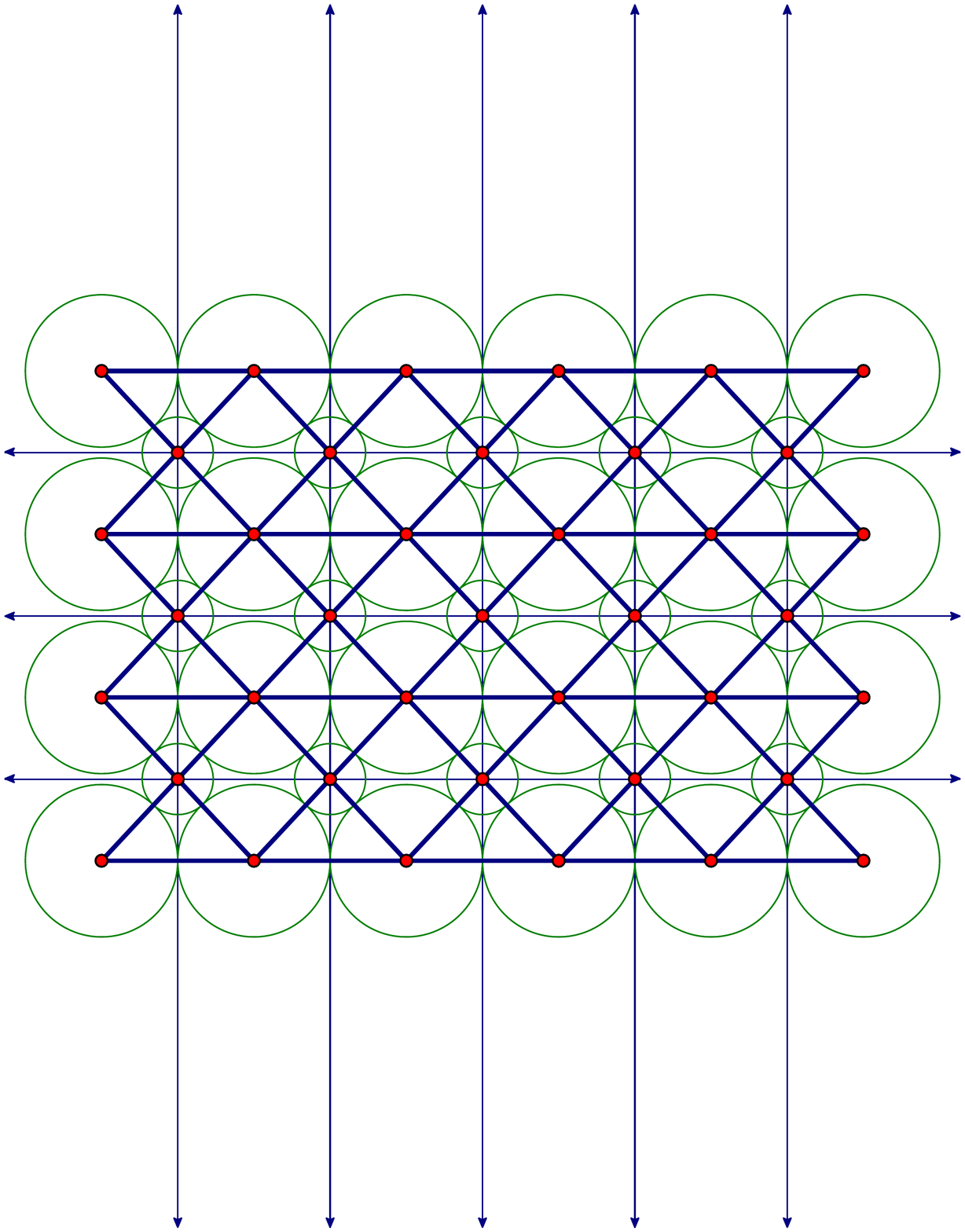}
        \caption{The conjectured most \\dense packing of two disks of  fixed \\ratio close to $\sqrt{2}-1$ in the plane.}
        \label{fig:binary-near-square}
    \end{minipage}%
    \begin{minipage}{0.39\textwidth} 
        \centering
        \includegraphics[width=0.8\linewidth]{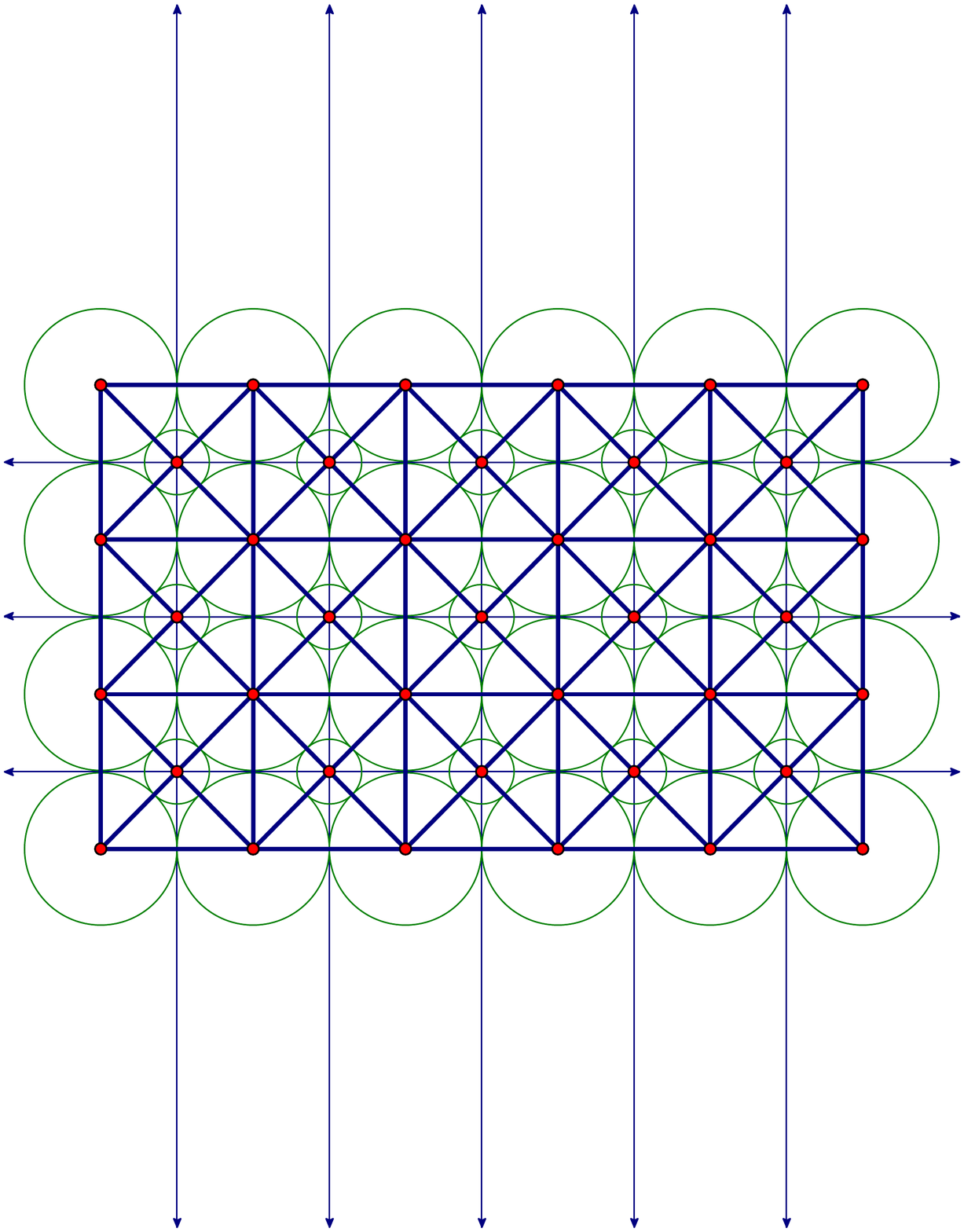}
        \caption{A most dense packing of disks with radius ratio $\sqrt{2}-1$ in the plane.}
        \label{fig:best-binary}
         \end{minipage}\\%
    \end{figure}
    
Another process one can use is to deform a packing, varying not only the configuration of centers, but varying the radii as well, while maintaing the condition that the disk centers with the determined radii form a packing.  There is similar second-order calculation for such deformations.  For example, for a packing with two disks in a torus having two different  radii, one starts again with the packing of Figure \ref{fig:2-square-torus} and deforms the lattice, the configuration of centers and radii as in Figure \ref{fig:binary-near-square} until eventually there is another contact  as in Figure \ref{fig:best-binary}.  Indeed, for any infinite packing of disks whose radii are $1$ and $\sqrt{2}-1=0.41421\dots$, in  \cite{Heppes-2-size-densest} Alid\'ar Heppes proved  that the maximum density for such a packing is $\pi(2-\sqrt{2})/2=0.92015\dots$, which is achieved by the packing in Figure  \ref{fig:best-binary}.  In the classic book by L\'aszl\'o Fejes T\'oth \cite{Fejes-Toth-book}, it is essentially conjectured that the packing in Figure \ref{fig:binary-near-square} has the maximum density for disks with two radii whose ratio is slightly greater than $\sqrt{2}-1$. 
The density of the the packings during the deformation from Figure \ref{fig:2-square-torus} to Figure \ref{fig:binary-near-square} is $\rho=\pi(1+r^2)/(4\sqrt{r^2+2r})$ as shown in Figure \ref{fig:density-plot}.  Note that the graph is concave up as predicted.  It known by Gerd Blind \cite{Blind} that for $0.742 < r < 1$, the most dense packing can do no better than $\pi/\sqrt{12}$ which is achieved by the ordinary triangular lattice packing, with just one disk size.
 \begin{figure}[h]
    \begin{center}
        \includegraphics[width=0.7\textwidth]{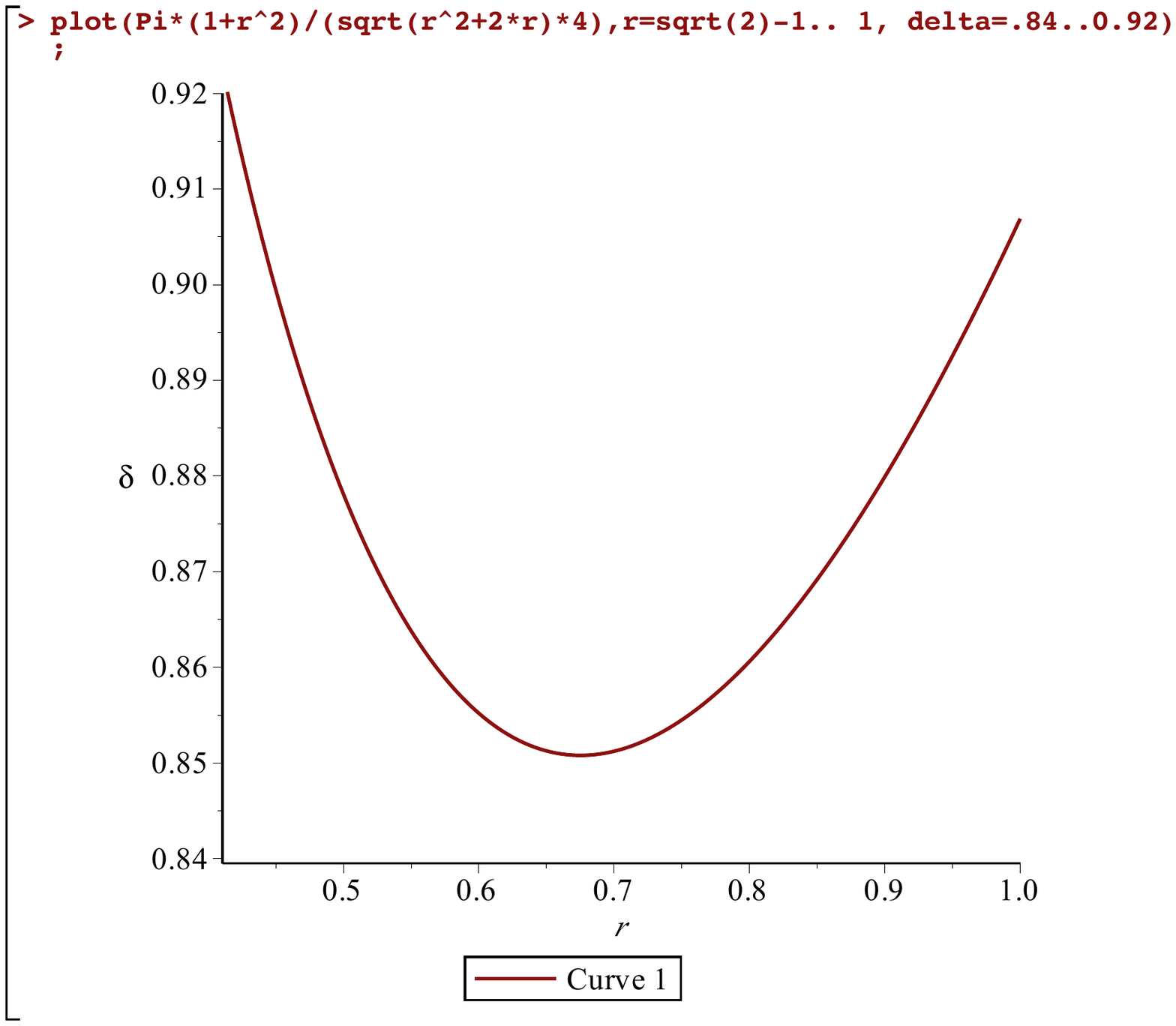}%
        \end{center}
        \captionsetup{labelsep=colon,margin=2cm}
    \caption{This plots the density of the packings from Figure \ref{fig:2-square-torus} to Figure\ref{fig:best-binary}.}
    \label{fig:density-plot}
    \end{figure}
     
From this discussion it seems that certain particular singular cases, with the most number of contacts, represent the most dense packings for two radii.  Away from those cases, the most dense packings may be when the singular cases are perturbed in a particular way.  
In \cite{Fejes-Toth-book} those singular packings were called \emph{compact} packings, which was defined to be when each packing disk is adjacent to and is surrounded by a cycle of packing disks, each touching the next.  But this is the same as saying that the graph of the packing is a triangulation.  

From the analytic packing point of view, one is given an abstract triangulation of a particular compact $2$-manifold.  Then the basic Koebe-Andreev-Thurston algorithm finds a circle packing with that given contact graph in a manifold of  constant curvature,  and this packing is unique up to the circle preserving linear fractional transformations of the manifold.  On the one hand this algorithm has no constraint that preserves the sizes of the radii.  On the other hand, if there are few enough of the disks as in Figures \ref{fig:triangular} and \ref{fig:best-binary}, or they are symmetric enough as in Figure \ref{fig:Kennedy-5}, then the Koebe-Andreev-Thurston algorithm will automatically have just two disk sizes.
\begin{figure}[h]
    \centering
        \includegraphics[width=0.6\textwidth]{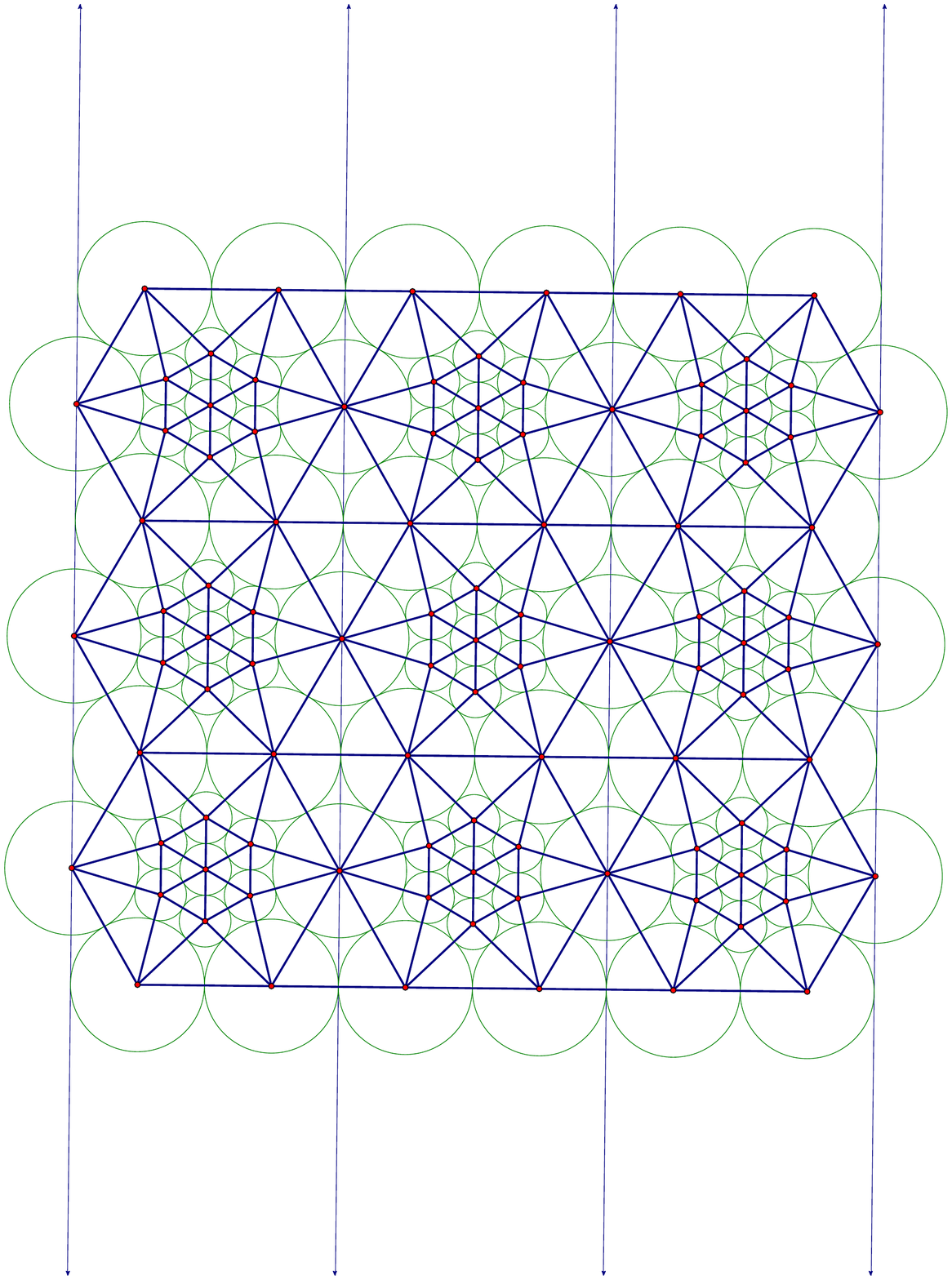}%
        \captionsetup{labelsep=colon,margin=2cm}   
    \caption{This is a packing of $3$ large disks and $7$ smaller ones for the given triangulation.}
    \label{fig:Kennedy-5}
     \end{figure}
     
The triangulation of Figure \ref{fig:Kennedy-5} has a $6$-fold rotational symmetry about the center of the central small circle, so the $3$ large circles have the same radius.  Similarly the $6$ small circles adjacent to the central small circle have the same radius.  Since the central circle is adjacent to the $6$ other small circles, it must have the same radius as the other small circles.  This is a compact/triangulated packing, one of $9$ possible classes described by Tom Kennedy in \cite{Kennedy-2-size}.

A similar analysis can be done in the tricusp case where some packings are conjectured to be the most dense by Uche, Stillinger and Torquato in \cite{Torquato-tricusp} using the $3$-fold symmetry.

There are three types of motions that increase the packing density. Each can be implemented with a Monte Carlo-type process, or a linear programming algorithm.
\begin{enumerate}
	\item(Danzer)  The lattice defining the torus metric is fixed while the configuration is perturbed so that the radii can be increased uniformly. \cite{Danzer}
	\item(Swinnerton-Dyer)  The lattice is deformed decreasing its determinant (and therefore the area of the torus) adjusting the configuration while fixing the radii. \cite{Swinnerton-Dyer}
	\item(Thurston)  The radii are adjusted fixing the configuration and the lattice so that the packing condition is preserved while increasing the sum of the squares of the radii.  This is essentially maximizing a positive definite quadratic function subject to linear constraints.  \cite{Thurston}
\end{enumerate}

The idea is that one can perform each of these motions, separately or together depending on what is desired.  Each process is named after a person who promoted that process in one form or another. 

\section{Conjecture} \label{section:conjectures}

 Kennedy in \cite{Kennedy-2-size} points out that there are triangulated packings of the plane (and effectively for a flat torus) that are not the most dense for given ratio of radii, which was $\sqrt{2} -1$ in the case being considered.  The idea is to take a square and equilateral tiling of the plane, use the vertices of that tiling for the centers of the larger disks, and the centers of the squares for the centers of the smaller disks.  So the final density of the packing is a weighted average of $\pi/\sqrt{12}=0.906899..$, the density of the triangular close packing, and $\pi(2-\sqrt{2})/2=0.920151..$, the density of the packing in Figure \ref{fig:best-binary}.  Figure \ref{fig:mixed-binary} shows such a periodic triangulated packing with density less than the maximal density.  The point is that even if we have a triangulated packing by disks of various sizes, that does not insure that it necessarily represents the maximum density for those sizes.
 
 We say that a packing of disks is \emph{saturated}  if there is no place to insert one of the disks in another part of the packing.

\begin{figure}[h]
    \centering
        \includegraphics[width=0.6\textwidth]{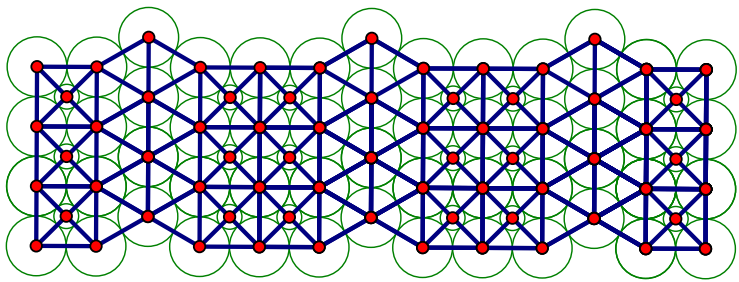}%
        \captionsetup{labelsep=colon,margin=2cm}   
    \caption{This is a periodic binary packing of disks with radius ratio $\sqrt{2}-1$ with density less than the maximum possible $\pi(2-\sqrt{2})/2=0.920151..$.}
    \label{fig:mixed-binary}
     \end{figure}
     
Taking a big leap, nevertheless, we conjecture the following:

\begin{conjecture} \label{conjecture:density} Suppose that ${\bf P}$ is a saturated packing, with a triangulated graph, of a finite number of packing disks in a torus with $n_1, n_2, \dots, n_k$ disks of radius $r_1 >r_2, \dots > r_k$ respectively with density $\rho_0$.  Then for all integers $m \ge 1$, and a packing of a torus with $mn_1, mn_2, \dots,m n_k$ disks of radius $r_1 >r_2, \dots > r_k$ respectively, the density is $\rho \le \rho_0$.
\end{conjecture}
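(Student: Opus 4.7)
The strategy is to reduce Conjecture \ref{conjecture:density} to a \emph{local density inequality} of the simplex-bound type due to L.~Fejes T\'oth. The saturation and triangulation hypotheses on ${\bf P}$ force its packing graph to triangulate the torus with $N=n_1+\cdots+n_k$ vertices, hence, by the Euler computation of Section \ref{section:analytic}, $3N$ edges and $2N$ triangles, each ``tight'' with mutually tangent vertex disks.

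First, I would recast the desired density inequality as an area comparison. Because ${\bf P}$ and ${\bf Q}$ share the same set of radii with counts in the proportion $1:m$, the total disk area of ${\bf Q}$ equals $m$ times that of ${\bf P}$, and therefore $\rho({\bf Q})\le\rho_0$ is equivalent to $A(\Lambda_{\bf Q})\ge m\,A(\Lambda_{\bf P})$, where $\Lambda_{\bf Q}$ and $\Lambda_{\bf P}$ denote the respective torus lattices. Next I would triangulate the torus of ${\bf Q}$ by starting from its contact graph and inserting non-contact diagonals, as in Section \ref{section:analytic}, producing $2mN$ triangles whose vertices are disk centers and whose side lengths are at least the corresponding tight lengths.

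Second, the central step would be to establish the \emph{local simplex bound}: for any triangle $T$ with vertex-disk radii $(r_a,r_b,r_c)$ and side lengths at least $r_a+r_b$, $r_b+r_c$, $r_c+r_a$, the local density
\[
\frac{\tfrac12\left(\alpha_a r_a^2+\alpha_b r_b^2+\alpha_c r_c^2\right)}{A(T)},
\]
where $\alpha_a,\alpha_b,\alpha_c$ are the interior angles of $T$, is bounded above by the local density $\rho_{abc}$ of the tight triangle of type $(a,b,c)$. Combined with a combinatorial bookkeeping that forces the area-weighted distribution of triangle types in ${\bf Q}$'s triangulation to coincide with that in ${\bf P}$'s --- via the vertex multiplicities $mn_i$ and the angle-sum identity $\sum_{T\ni v}\alpha(T)=2\pi$ at each vertex of type $i$ --- the summation of the local bounds over all triangles would yield $A(\Lambda_{\bf Q})\ge m\,A(\Lambda_{\bf P})$, as required.

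The main obstacle is twofold. The local simplex bound itself is genuinely difficult: for two disk sizes with ratio $\sqrt{2}-1$ it is Heppes' theorem cited in Section \ref{section:varying}, but in general the sector-area to triangle-area ratio is not monotone under arbitrary deformation, and a case analysis by radius-ratio regime will be required, together with a fallback --- such as a Voronoi-cell reformulation --- for degenerate configurations in which a fourth disk of ${\bf Q}$ crosses an edge of the chosen triangulation. More fundamentally, the required coincidence of area-weighted triangle-type distributions between ${\bf P}$ and ${\bf Q}$ is not automatic: triangles in ${\bf Q}$ can deform away from the tight shapes and their angles can redistribute across types. Controlling this redistribution will likely need the saturation of ${\bf P}$ in an essential way, and I expect this combinatorial--metric rigidity to be the heart of the matter, reflecting the fact that Conjecture \ref{conjecture:density} subsumes classically open density problems for binary and higher-component disk packings.
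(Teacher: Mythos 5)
The statement you are trying to prove is Conjecture \ref{conjecture:density}: the paper offers no proof of it (it is introduced with ``Taking a big leap, nevertheless, we conjecture the following''), and the only evidence cited is the Heppes--Kennedy verification of seven of the nine two-size triangulated classes, together with Nazarov's example showing that the saturation hypothesis cannot be dropped. Your text is therefore not a proof but an outline, and you candidly flag its two pillars as unresolved; both of them are in fact precisely the open content of the conjecture. The ``local simplex bound'' you posit is, for unequal radii, not a known lemma but a family of hard inequalities (this is why Heppes's and Kennedy's proofs are delicate and case-specific), and your own caveat about a fourth disk of ${\bf Q}$ intruding into a triangle is essential: with diagonals that are not contacts, the vertex-sector count does not capture all disk area in a triangle, so the clean ``sector area over triangle area'' reformulation does not even set up correctly without a Delaunay- or Voronoi-type decomposition, which changes the inequality one must prove.

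The deeper gap is the claimed ``combinatorial bookkeeping'': the vertex multiplicities $mn_i$ and the angle sums $\sum_{T\ni v}\alpha(T)=2\pi$ do \emph{not} force the area-weighted distribution of triangle types in a triangulation of ${\bf Q}$ to match that of ${\bf P}$. These are only $k+1$ linear constraints on the type distribution, far too few to pin it down, and the paper itself illustrates how different triangulated packings with the same radius ratio realize different type mixes and different densities (Figure \ref{fig:mixed-binary}); moreover ${\bf Q}$ need not be saturated or triangulated at all, so its triangles can be arbitrarily far from tight and their angles can redistribute freely across types. Since you never actually invoke the saturation of ${\bf P}$ (you only remark it will ``likely'' be needed), nothing in the outline rules out exactly the kind of density transfer between type classes that makes binary and multi-size density problems open. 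In short: the reduction to a local bound plus a distribution-rigidity statement is a reasonable research plan consistent with the Fejes T\'oth tradition the paper cites, but both steps are unproven, the second is false as stated, and so the conjecture remains open after your argument just as it does in the paper.
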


Originally the condition that the packing was saturated was omitted, and Fedja Nazarov found the following counterexample, shown in Figure \ref{fig:Nazarov}.  

\begin{figure}[h]
    \centering
        \includegraphics[width=0.8\textwidth]{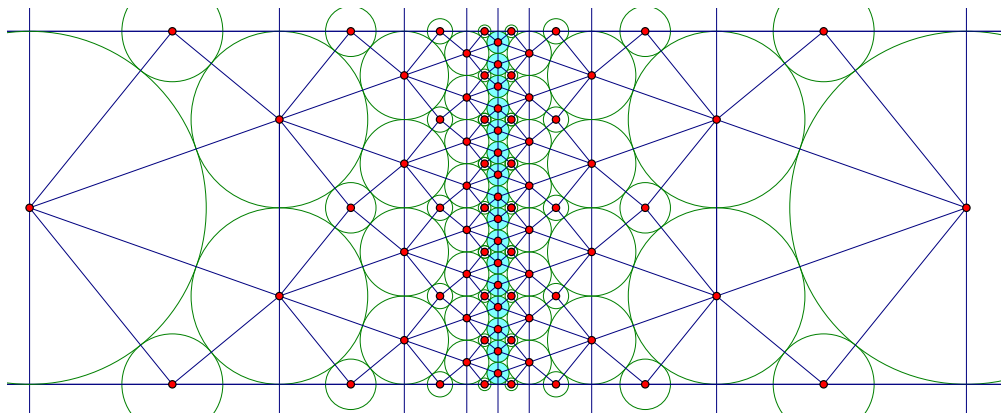}%
        \captionsetup{labelsep=colon,margin=2cm}   
    \caption{This is a fundamental region of a triangulated packing of a torus, where the line of small colored packing disks can be removed and reinserted in the triangular regions to the right and left.  Then the packing becomes not collectively jammed.  Indeed it is not even locally maximally dense, since the packing disks have room to grow into the line of removed disks.}
    \label{fig:Nazarov}
     \end{figure}
     
In \cite{Kennedy-2-size}, Kennedy shows a list of nine classes of all the triangulated packings of disks in the plane with just two disk sizes.  Seven of those nine packing have been shown, by Heppes and Kennedy \cite{Heppes-2-size-densest, Kennedy-densest} to be the most dense using just those two sizes, which is a bit stronger than the statement of Conjecture \ref{conjecture:density}.  This is support for Conjecture \ref{conjecture:density}.

An interesting special case is a packing of a torus with  $n_1$ disks of radius $1$ and $n_2$ disks of radius $\sqrt{2} -1$, $n_1 \ge n_2$, then Conjecture \ref{conjecture:density} implies that the maximum density is 
\[
\rho = \pi\frac{n_1 + n_2(\sqrt{2}-1)^2}{2\sqrt{3}(n_1-n_2) +4n_2}.
\]
Notice in the case when $k=2$, and $r_2/r_1 =\sqrt{2}-1$, and $n_1=n_2$ the statement of Conjecture \ref{conjecture:density} is weaker that Heppes's Theorem  \cite{Heppes-2-size-densest}, since it assumes $n_1=n_2$.  On the other hand, as far as we know, for other proportions of sizes of disks, Conjecture \ref{conjecture:density} is not known.  In particular, continuing with the  $r_2/r_1 =\sqrt{2}-1$, $n_2 > n_1$  case, it seems that a triangulated packing of a torus does not exist, and perhaps the most dense packings segregate into the triangular lattice and square lattice pieces.   

In another direction, it would be interesting to see how the nature of the triangulation influences the density of the corresponding triangulated packing.  Given a triangulation of the plane, an \emph{ elementary stellar subdivision} is where a triangle in the triangulation is removed, and it is replaced by the cone over its boundary, or an edge is removed and replaced by the cone over the resulting quadrilateral.  For a stellar subdivision of a triangle, it is clear that the density of the corresponding must increase, since one simply places an additional disk in the given triangular region.  In many cases, for the stellar subdivision of an edge the density increases.  However, if one starts with the Heppes packing graph of Figure \ref{fig:best-binary} and does a stellar subdivision as indicated in Figure \ref{fig:stellar-deflation}, the density decreases.  Indeed, after another stellar subdivision one gets back to a two-fold covering of the the original Figure \ref{fig:best-binary} with the same density.

\begin{figure}[h]
    \centering
        \includegraphics[width=0.3\textwidth]{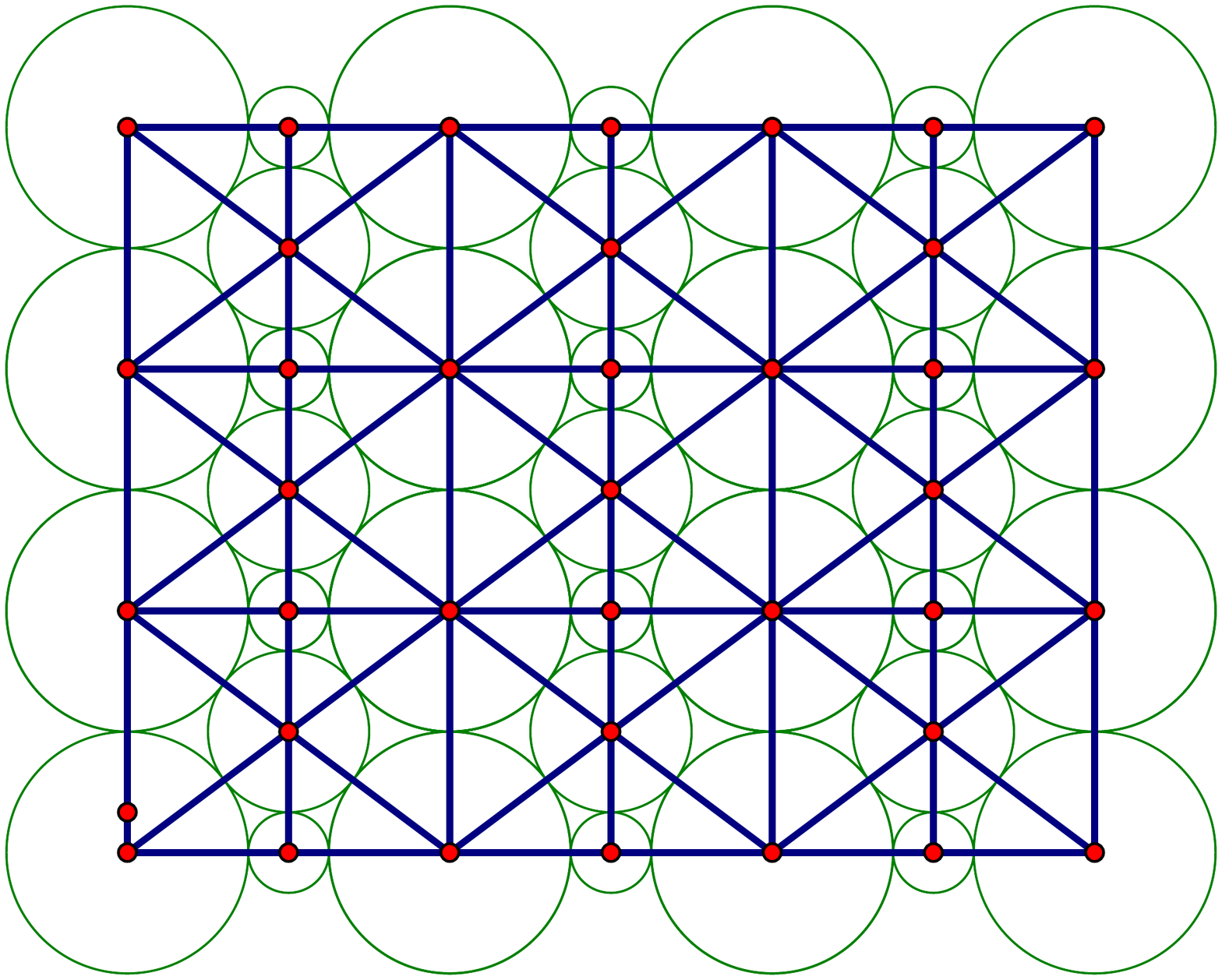}%
        \captionsetup{labelsep=colon,margin=2cm}   
    \caption{This a stellar subdivision of the graph of Figure \ref{fig:best-binary}, and has density $7\pi/24=0.91629..$, which is somewhat less than $\pi(2-\sqrt{2})/2=0.92015..$, the density of Figure \ref{fig:best-binary}. The radii are in ratio $1:2:3$.}\label{fig:stellar-deflation}
   \end{figure}

\section{Remarks and related work} \label{section:remarks}

There appear to be roughly four groups who work on packing problems, each from their particular point of view.  

One group deals with simply finding dense packings of circles in the plane and proving certain packings are the most dense when possible.  This group is epitomized by the work of L\'aszl\'o Fejes T\'oth.  His book \cite{Fejes-Toth-book} is an early attempt to show what was known and conjectured along with many other related problems and conjectures.  Indeed, there are places in \cite{Fejes-Toth-book} where it seems that Conjecture \ref{conjecture:density} is essentially in the background, at least for some particular packings.  See Melissen's Phd. Thesis \cite{Melissen} for quite a few conjectures for the most dense packings in various containers.  Note that the most dense packings conjectured  in \cite{Connelly-Vivian} agree with those in \cite{Melissen}.  For provably most dense packings for  fixed square and triangular tori, for small numbers of disks there are the results in \cite{Connelly-Dickinson, Musin, Connelly-Vivian}.

A second group uses linear programing techniques to find upper bounds for sphere packings, particularly in higher dimensions, where things are generally much harder.  A good outline of this point of view in the survey by de Laat, Filho and Vallentin \cite{packing-survey}, where their techniques work in dimension two as well.  However, their bounds are often not sharp.  For example, in dimension three for binary sphere packings where the ratio of the radii is $\sqrt{2}-1$, the most dense packing is conjectured to be $0.793$ which is achieved when the large spheres are centered at the face centered cubic lattice.  These centers form the vertices of a tiling of space by regular octahedra and regular tetrahedra.  The centers of small spheres are placed at the centers of the octahedra, so that the graph of the packing forms the one-skeleton of a triangulation of space.  This is the structure of NaCl ordinary table salt.  The techniques of \cite{packing-survey} provide an upper bound of $0.813$, which is reasonably close to the salty lower bound of $0.793$.  The salt packing is a three-dimensional extension of the configuration of Figure \ref{fig:best-binary}.

A third group has to do with the Koebe-Andreev-Thurston algorithms that creates packings from the graph of the packings.  However, these techniques do not initially specify the radii or density of the resulting packing.  A good overview is in the book by Ken Stephenson \cite{Stephenson}, where many examples are shown as well as connections to conformal mappings, etc.  It is interesting to note that one of proofs of the Koebe-Andreev-Thurston packings comes from a minimization argument by Colin de Verdi\`ere in \cite{Colin-de-Verdiere}, similar to the process that described in Section \ref{section:varying}.

A fourth group is motivated from the physics of granular material or colloidal clusters at in the work of Torquato and others in \cite{Torquato-isostatic, Donev-Torquato-Connelly}.

There is a lot of room for generalization and possibly improvement of the results here.

\begin{enumerate}
\item  Is the example Figure \ref{fig:2-disks-square} the only case for $n \ge 2$ disks on a fixed rectangular torus, where the isostatic condition does not hold?  It should be kept in mind that the results in \cite{Torquato-rattlers}  shows that non-isostatic (strictly) jammed packings seem to be quite frequent for larger numbers when there is one size of radius.

\item It seems very reasonable that an analysis similar to the argument here shows that a corresponding isostatic conjecture holds, where the parameters of a compact hyperbolic $2$-dimensional surface are generic, as well as the ratio of the radii.

\item The proof that an infinitesimal flex implies a finite motion for packings of disks on the $2$-dimensional sphere is not known, yet it seems reasonable that it is true, and if so, there should be a corresponding isostatic condition for generic radii.

\item There are many circumstances where there is a jammed packing in a bounded container with an appropriate condition on the boundary of the container, and the infinitesimal flex implies finite motion.  It is reasonable that if the shape of the container is generic including the ratio of the radii, that the packing is isostatic.   

\item Is there a way to prove the isostatic conjecture for packings in higher dimensions.  Presumably this would involve a different argument, since the analytic packing theory would not be available.  In the $2$-dimensional case, the process described here, with radii, configuration centers, and lattice moving, eventually converges to the case when the faces are all triangles.  In higher dimensions, it might be the case that the triangles are replaced by rigid polytopes as in the discussion in \cite{Bezdeks-Connelly}.
\end{enumerate}

\bibliographystyle{plain}

\begin{thebibliography}{10}

\bibitem{Asimow-Roth-I}
L.~Asimow and B.~Roth.
\newblock The rigidity of graphs.
\newblock {\em Trans. Amer. Math. Soc.}, 245:279--289, 1978.

\bibitem{Asimow-Roth-II}
L.~Asimow and B.~Roth.
\newblock The rigidity of graphs. {II}.
\newblock {\em J. Math. Anal. Appl.}, 68(1):171--190, 1979.

\bibitem{Torquato-mono}
S.~Atkinson, F.H. Stillinger, and S.~Torquato.
\newblock Existence of isostatic, maximally random jammed monodisperse
  hard-disk packings.
\newblock {\em PROCEEDINGS OF THE NATIONAL ACADEMY OF SCIENCES OF THE UNITED
  STATES OF AMERICA}, 111(52):18436--18441, 2014.

\bibitem{Torquato-isostatic}
Steven Atkinson, Frank~H. Stillinger, and Salvatore Torquato.
\newblock Detailed characterization of rattlers in exactly isostatic, strictly
  jammed sphere packings.
\newblock {\em PHYSICAL REVIEW E}, 88, 2013.

\bibitem{Torquato-rattlers}
Steven Atkinson, Frank~H. Stillinger, and Salvatore Torquato.
\newblock Existence of isostatic, maximally random jammed monodisperse
  hard-disk packings.
\newblock {\em PHYSICAL REVIEW E}, 8(6):062208--1--062208--12, 2013.

\bibitem{Bezdeks-Connelly}
A.~Bezdek, K.~Bezdek, and R.~Connelly.
\newblock Finite and uniform stability of sphere packings.
\newblock {\em Discrete Comput. Geom.}, 20(1):111--130, 1998.

\bibitem{Blind}
Gerd Blind.
\newblock \"uber {U}nterdeckungen der {E}bene durch {K}reise.
\newblock {\em J. Reine Angew. Math.}, 236:145--173, 1969.

\bibitem{Colin-de-Verdiere}
Yves Colin~de Verdi\`ere.
\newblock Un principe variationnel pour les empilements de cercles.
\newblock {\em Invent. Math.}, 104(3):655--669, 1991.

\bibitem{Connelly-energy}
Robert Connelly.
\newblock Rigidity and energy.
\newblock {\em Invent. Math.}, 66(1):11--33, 1982.

\bibitem{basics}
Robert Connelly.
\newblock Rigidity of packings.
\newblock {\em European J. Combin.}, 29(8):1862--1871, 2008.

\bibitem{Connelly-Dickinson}
Robert Connelly and William Dickinson.
\newblock Periodic planar disc packings.
\newblock {\em Philos. Trans. R. Soc. Lond. Ser. A Math. Phys. Eng. Sci.},
  372(2008):20120039, 17, 2014.

\bibitem{Connelly-Vivian}
Robert Connelly, Matthew Funkhouser, Vivian Kuperberg, and Evan Solomonides.
\newblock Packings of equal disks in a square torus.
\newblock {\em Discrete Comput. Geom.}, xx(xx):1--29, 2017.

\bibitem{Connelly-Smith}
Robert Connelly, Jeffrey~D. Shen, and Alexander~D. Smith.
\newblock Ball packings with periodic constraints.
\newblock {\em Discrete Comput. Geom.}, 52(4):754--779, 2014.

\bibitem{Danzer}
L.~Danzer.
\newblock Finite point-sets on {$S^2$} with minimum distance as large as
  possible.
\newblock {\em Discrete Math.}, 60:3--66, 1986.

\bibitem{packing-survey}
David de~Laat, Fernando~M\'ario de~Oliveira~Filho, and Frank Vallentin.
\newblock Upper bounds for packings of spheres of several radii.
\newblock {\em Forum Math. Sigma}, 2:e23, 42, 2014.

\bibitem{Will-square}
William Dickinson, Daniel Guillot, Anna Keaton, and Sandi Xhumari.
\newblock Optimal packings of up to five equal circles on a square flat torus.
\newblock {\em Beitr. Algebra Geom.}, 52(2):315--333, 2011.

\bibitem{Torquato-tricusp}
Aleksandar Donev, Frank~H. Stillinger, and Salvatore Torquato.
\newblock Calculating the free energy of nearly jammed hard-particle packings
  using molecular dynamics.
\newblock {\em J. Comput. Phys.}, 225(1):509--527, 2007.

\bibitem{Donev-Torquato-Connelly}
Aleksandar Donev, Salvatore Torquato, Frank~H. Stillinger, and Robert Connelly.
\newblock A linear programming algorithm to test for jamming in hard-sphere
  packings.
\newblock {\em J. Comput. Phys.}, 197(1):139--166, 2004.

\bibitem{missing-stress-equation}
S.~F. Edwards and D.~V. Grinev.
\newblock The missing stress-geometry equation in granular media.
\newblock {\em Phys. A}, 294(1-2):57--66, 2001.

\bibitem{stress-distribution-I}
S.~F. Edwards and C.~C. Mounfield.
\newblock A theoretical model for the stress distribution in granular matter.
  {I}. {B}asic equations.
\newblock {\em Phys. A}, 226(1-2):1--11, 1996.

\bibitem{stress-distribution-III}
S.~F. Edwards and C.~C. Mounfield.
\newblock A theoretical model for the stress distribution in granular matter.
  {III}. {F}orces in sandpiles.
\newblock {\em Phys. A}, 226(1-2):25--33, 1996.

\bibitem{Fejes-Toth-book}
L{\'a}szl{\'o} Fejes~T{\'o}th.
\newblock {\em Lagerungen in der {E}bene auf der {K}ugel und im {R}aum}.
\newblock Springer-Verlag, Berlin-New York, 1972.
\newblock Zweite verbesserte und erweiterte Auflage, Die Grundlehren der
  mathematischen Wissenschaften, Band 65.

\bibitem{Guo}
Ren Guo.
\newblock Local rigidity of inversive distance circle packing.
\newblock {\em Trans. Amer. Math. Soc.}, 363(9):4757--4776, 2011.

\bibitem{Heppes-2-size-densest}
Alad\'ar Heppes.
\newblock Some densest two-size disc packings in the plane.
\newblock {\em Discrete Comput. Geom.}, 30(2):241--262, 2003.
\newblock U.S.-Hungarian Workshops on Discrete Geometry and Convexity
  (Budapest, 1999/Auburn, AL, 2000).

\bibitem{Kennedy-densest}
Tom Kennedy.
\newblock A densest compact planar packing with two sizes of discs.
\newblock {\em arXiv:math/0412418v1}, 2004.

\bibitem{Kennedy-2-size}
Tom Kennedy.
\newblock Compact packings of the plane with two sizes of discs.
\newblock {\em Discrete Comput. Geom.}, 35(2):255--267, 2006.

\bibitem{Laman}
G.~Laman.
\newblock On graphs and rigidity of plane skeletal structures.
\newblock {\em J. Engrg. Math.}, 4:331--340, 1970.

\bibitem{Melissen}
J.B.M. Melissen.
\newblock Packing and covering with circles = pakken en overdekken met cirkels:
  (met een samenvatting in het nederlands.
\newblock {\em Ph.D. thesis. Universiteit Utrecht, Utrecht}, 1997.

\bibitem{stress-distribution-II}
C.~C. Mounfield and S.~F. Edwards.
\newblock A theoretical model for the stress distribution in granular matter.
  {II}. {F}orces in pipes.
\newblock {\em Phys. A}, 226(1-2):12--24, 1996.

\bibitem{Musin}
Oleg~R. Musin and Anton~V. Nikitenko.
\newblock Optimal packings of congruent circles on a square flat torus.
\newblock {\em Discrete Comput. Geom.}, 55(1):1--20, 2016.

\bibitem{Roth-Whiteley}
B.~Roth and W.~Whiteley.
\newblock Tensegrity frameworks.
\newblock {\em Trans. Amer. Math. Soc.}, 265(2):419--446, 1981.

\bibitem{Roux-isostatic}
Jean-No\"el Roux.
\newblock Geometric origin of mechanical properties of granular materials.
\newblock {\em Phys. Rev. E (3)}, 61(6, part B):6802--6836, 2000.

\bibitem{Stephenson}
Kenneth Stephenson.
\newblock {\em Introduction to circle packing}.
\newblock Cambridge University Press, Cambridge, 2005.
\newblock The theory of discrete analytic functions.

\bibitem{Swinnerton-Dyer}
H.~P.~F. Swinnerton-Dyer.
\newblock Inhomogeneous lattices.
\newblock {\em Proc. Cambridge Philos. Soc.}, 50:20--25, 1954.

\bibitem{Thurston}
William~P. Thurston.
\newblock Shapes of polyhedra and triangulations of the sphere.
\newblock In {\em The {E}pstein birthday schrift}, volume~1 of {\em Geom.
  Topol. Monogr.}, pages 511--549. Geom. Topol. Publ., Coventry, 1998.

\end{thebibliography}

\end{document}